\def\PC{\mathcal{P}}
\def\MC{\mathcal{M}}
\def\LC{\mathcal{L}}
\def\R{\mathbf{R}}
\def\1{\mathbf{1}}
\def\tr{\rm{tr}}
\def\al{\alpha}
\def\be{\beta}
\def\pa{\partial}
\def\ep{\epsilon}
\def\de{\delta}
\newtheorem{prop}{Proposition}[section]
\newtheorem{theorem}{Theorem}[section]
\newtheorem{remark}{Remark}
\newcommand{\si}{\sigma}
\newcommand{\om}{\omega}
\begin{document}
\title{Nonlinear L\'evy and nonlinear Feller processes: an analytic introduction\thanks{
Supported by the AFOSR grant FA9550-09-1-0664 'Nonlinear Markov control processes and games'}}
\author{Vassili N. Kolokoltsov\thanks{Department of Statistics, University of Warwick,
 Coventry CV4 7AL UK,
  Email: v.kolokoltsov@warwick.ac.uk}}
\maketitle

\begin{abstract}
The program of studying general nonlinear Markov processes was put forward in
\cite{Ko07}. This program was developed by the author in monograph \cite{Ko10nonlbook},
where, in particular, nonlinear L\'evy processes were introduced.
 The present paper is an invitation to the rapidly developing topic of this monograph.
We provide a quick (and at the same time more abstract) introduction to the basic analytical aspects
of the theory developed in Part II of \cite{Ko10nonlbook}.
\end{abstract}

\section{Introduction}

Nonlinear L\'evy processes were introduced by the author in \cite{Ko10nonlbook}.
We provide a quick introduction to the basic analytical aspects
of the theory developed in Part II of \cite{Ko10nonlbook} giving more concise and more general
formulations of some basic facts on well-posedness and sensitivity of nonlinear processes.
For general background in L\'evy and Markov processes we refer to books
\cite{Ap}, \cite{Ko11Markbook}, \cite{Kyp}.

For sensitivity of the nonlinear jump-type processes, e.g. Boltzmannn or Smoluchovski, we refer
to papers \cite{Ko06a} and \cite{Bail}.

Loosely speaking, a nonlinear Markov evolution is just a dynamical system generated by
a measure-valued ordinary differential equation (ODE) with the
specific feature of preserving positivity. This feature
distinguishes it from a general Banach space valued ODE
and yields a natural link with probability theory, both in
interpreting results and in the tools of analysis.
Technical complications for the sensitivity analysis,
again compared with the standard theory of vector-valued ODE, lie in the specific unboundedness of generators
that causes the derivatives of the solutions to nonlinear equations (with respect to parameters or initial conditions)
to live in other spaces, than the evolution itself. From the probabilistic point of view, the
first derivative with respect to initial data (specified by the
 linearized evolution around a path of nonlinear dynamics)
 describes the interacting particle approximation to this nonlinear dynamics
 (which, in turn, serves as the dynamic law of large numbers to this approximating
 Markov system of interacting particles), and the second derivative describes the limit of fluctuations
 of the evolution of particle systems around its law of large numbers
 (probabilistically the dynamic central limit theorem). In this paper we concentrate only on the analytic aspects
 of the theory referring to \cite{Ko10nonlbook} for probabilistic interpretation.

Recall first the definition of a propagator.
For a set $S$, a family of mappings $U^{t,r}$, from $S$ to itself,
parametrized by the pairs of real numbers $r \leq t$ (resp. $t \leq r$)
from a given finite or infinite interval is called a {\it forward
propagator}\index{propagator} (resp. a {\it backward propagator}\index{backward propagator}),
 if $U^{t,t}$ is the identity operator in $S$ for all $t$ and
the following {\it chain rule}\index{chain rule}, or {\it propagator equation}\index{propagator equation},
holds for $r \leq s \leq t$ (resp. for $t \leq s \leq r$): $U^{t,s}
U^{s,r} = U^{t,r}$. If the
mappings $U^{t,r}$ forming a backward propagator depend only on the
differences $r-t$, then the family $T^t=U^{0,t}$ forms a semigroup.
That is why, propagators are sometimes referred to as two-parameter semigroups.
By a propagator we mean a forward or a backward propagator (which should be clear from the context).

 Let $\tilde \MC(X)$ be a dense
subset of the space $\MC(X)$ of finite (positive Borel) measures on
a polish (complete separable metric) space $X$ (considered in its weak topology). By a nonlinear
{\it sub-Markov}\index{propagator!sub-Markov} (resp. {\it Markov})
{\it propagator}\index{propagator!Markov} in $\tilde \MC(X)$ we
shall mean any propagator $V^{t,r}$ of possibly nonlinear
transformations of $\tilde \MC(X)$ that do not increase (resp.
preserve) the norm. If $V^{t,r}$ depends only on the difference $t-r$
and hence specifies a semigroup, this semigroup is called nonlinear or
generalized {\it sub-Markov} or {\it Markov}
respectively.

The usual, linear, Markov propagators or semigroups correspond to the
case when all the transformations are linear contractions in the
whole space $\MC(X)$. In probability theory these propagators
describe the evolution of averages of Markov processes, i.e.
processes whose evolution after any given time $t$ depends on the
past $X_{\le t}$ only via the present position $X_t$. Loosely speaking, to any nonlinear Markov propagator there
corresponds a process whose behavior after any time $t$ depends on
the past $X_{\le t}$ via the position $X_t$ of the process and its
distribution at $t$.

More precisely, consider the nonlinear
kinetic equation\index{kinetic equation}
\begin{equation}
\label{eqkineqmeanfield1}
 {d \over dt} (g, \mu_t)
 =(A[\mu_t]g, \mu_t)
\end{equation}
with a certain family of operators $A[\mu]$ in $C(X)$ depending
on $\mu$ as a parameter and such that each $A[\mu]$ specifies a uniquely defined Markov process
(say, via solution to the corresponding martingale problem, or by generating a
Feller semigroup).

 Suppose that the Cauchy problem for equation \eqref{eqkineqmeanfield1}
is well posed and specifies the weakly continuous Markov semigroup
$T_t$ in $\MC(X)$. Suppose also that for any weakly continuous curve
$\mu_t \in \PC(X)$ (the set of probability measures on $X$) the solutions to the Cauchy problem of the
equation
\begin{equation}
\label{eqnonhomogweakprop}
 \frac{d}{dt} (g,\nu_t)=(A[\mu_t]g, \nu_t)
\end{equation}
define a weakly continuous propagator $V^{t,r}[\mu_.]$, $r\le t$, of
linear transformations in $\MC(X)$ and hence a Markov process in
$X$, with transition probabilities $p^{[\mu_.]}_{r,t}(x,dy)$. Then to any $\mu\in \PC(X)$
there corresponds a (usual linear, but time non-homogeneous) Markov process $X_t^{\nu}$ in $X$ ($\nu$ stands for an initial distribution) such that its distributions $\nu_t$ solve equation \eqref{eqnonhomogweakprop} with the initial condition $\nu$. In particular,
the distributions of $X_t^{\mu}$ (with the initial condition $\mu$) are $\mu_t=T_t(\mu)$ for all
times $t$ and the transition probabilities
$p^{[\mu_.]}_{r,t}(x,dy)$ specified by equation
\eqref{eqnonhomogweakprop} satisfy the condition
\begin{equation}
\label{eqnonhomogweaktransnonlinearprop}
 \int_{X^2} f(y)p^{[\mu_.]}_{r,t}(x,dy)\mu_r(dx)=(f,V^{t,r}\mu_r)=(f,\mu_t).
\end{equation}
 We shall call the family of processes $X_t^{\mu}$ a {\it nonlinear
Markov process}\index{nonlinear Markov process}.
When each $A[\mu]$ generates a
Feller semigroup and $T_t$ acts on the whole $\MC(X)$ (and not only on its dense subspace),
the corresponding process can be also called {\it nonlinear Feller}.
Allowing for the evolution on subsets $\tilde \MC(X)$ is however crucial, as it often occurs in applications,
say for the Smoluchovski or Boltzmann equation with unbounded rates.

Thus a nonlinear Markov process is a semigroup of the transformations of distributions
such that to each trajectory is attached a ``tangent'' Markov process with the same marginal distributions.
The structure of these tangent processes is not intrinsic to the semigroup, but can be specified by
choosing a stochastic representation for the generator, that is of the r.h.s. of \eqref{eqnonhomogweakprop}.

In this paper we shall prove a general well-posedness result for nonlinear Markov semigroups that will cover, as particular cases,

(i) {\it nonlinear L\'evy processes} specified by the families
\[
 A_{\mu}f(x)=\frac{1}{2}(G(\mu) \nabla,
\nabla)f(x)+(b(\mu), \nabla f)(x)
\]
\begin{equation}
\label{eqLevygennonlinearrep} +\int [f(x+y)-f(x)-(y, \nabla f
(x))\1_{B_1}(y)]\nu(\mu,dy),
\end{equation}
where, for each probability measure $\mu$ on $\R^d$, $\nu (\mu,.)$ is a L\'evy measure (i.e. a Borel measure on $\R^d$ without a mass point at the origin and such that the function $\min (1,|y|^2)$ is integrable with respect to it), $G(\mu)$ is a symmetric non-negative $d\times d$-matrix, $b(\mu)$ a vector in $\R^d$ and $B_1$ is the unit ball in $\R^d$ with $\1_{B_1}$ being the corresponding indicator function;

(ii) processes {\it of order at most one} specified by the families
\begin{equation}
\label{eqshortgeneratornonlin}
 A_{\mu}f(x)=(b(x,\mu),\nabla f(x))+\int_{\R^d}
 (f(x+y)-f(x))\nu (x,\mu,dy),
 \end{equation}
 where the L\'evy measure $\nu$ is supposed to have a finite first moment;

 (iii)  mixtures of possibly degenerate diffusions and stable-like
processes and processes generated
by the operators of order at most one, explicitly defined below in Proposition
\ref{propstablelikeprocdegrep}.

It is worth noting that equations of type \eqref{eqnonhomogweakprop} that
appear naturally as dynamic Law of Large Numbers for interacting particles, can be deduced,
on the other hand, from the mere assumption of positivity
preservation, see \cite{Ko10nonlbook} and \cite{Str03}. In case of diffusion
(partial second order) operators $A[\mu]$, the corresponding evolution
\eqref{eqkineqmeanfield1} was first analyzed by McKean and is often called the {\it McKean}
or {\it McKean-Vlasov} diffusion. Its particular case that arises
 as the limit of grazing collisions in the Boltzmann collision model is sometimes referred to as the {\it Landau-Fokker-Planck}
 equation, see \cite{GMN} for some recent results.
 The case of $A[\mu]$ being a Hamiltonian vector field is often called a {\it Vlasov-type} equation,
 as it contains the celebrated {\it Vlasov equation} from plasma physics.
 The case of $A[\mu]$ being pure integral operators comprises a large variety of models from
 statistical mechanics (say, Boltzmann and Smoluchovskiu equations) to evolutionary games (replicator dynamics),
 see \cite{Ko10nonlbook} for a comprehensive review and papers
\cite{Frank}, \cite{Zak00}, \cite{Zak02} for the introduction to nonlinear Markov evolutions
from the physical point of view.

The following basic notations will be used:

 $C_\infty(\R^d)\subset C(\R^d)$ consists of $f$ such that  $\lim_{x\to
 \infty}f(x)=0$,

 $C^k(\R^d)$ (resp. $C^k_{\infty}(\R^d)$) is the Banach space of $k$
times continuously differentiable functions with bounded derivatives
on $\R^d$ (resp. its closed subspace of functions $f$ with $f^{(l)} \in C_\infty(\R^d)$, $l \le k$) with
 \[
  \|f\|_{C^k(\R^d)}= \sum_{l=0}^k \|f^{(l)}\|_{C(\R^d)},
  \]

  $\PC(\R^d)$ the set of probability measures on $\R^d$.

$\|A\|_{D\to B}$ denotes the norm of an operator $A$ in the Banach space $\LC(D,B)$
of bounded linear operators between Banach spaces $D$ and $B$, and $\|\xi\|_B$ denotes the norm of $\xi$
 as an element of the Banach space $B$.

\section{Dual propagators}

  A backward propagator
$\{U^{t,r}\}$ of uniformly (for $t,r$ from a compact set) bounded linear
operators on a Banach space $B$ is called {\it strongly continuous}\index{strongly continuous propagator} if
the family $U^{t,r}$ depends strongly continuously on $t$ and $r$.

For a strongly continuous backward propagator $\{U^{t,r}\}$ of bounded linear
operators on a Banach space $B$ with a common invariant domain $D\subset B$,
 which is itself a Banach space
with the norm $\| \, \|_D \ge \| \, \|_B$,
let $\{A_t\}$, $t \ge 0$, be a family of bounded linear operators $D\to B$
depending strongly measurably on $t$ (i.e. the function $t\mapsto A_t f \in B$ is measurable for each $f\in D$).
Let us say that the family $\{A_t\}$ {\it generates}
$\{U^{t,r}\}$ on the invariant domain $D$\index{propagator!generator family} if the equations
\begin{equation}
\label{eqdefnonhomgenerator}
  \frac{d}{ds} U^{t,s} f = U^{t,s} A_s f,
 \quad \frac{d}{ds} U^{s,r}f = - A_s U^{s,r}f, \quad t \le s \le r,
\end{equation}
 hold a.s. in $s$ for any $f\in D$, that is
there exists a negligible subset $S$ of $\R$ such that for all $t<r$ and all $f\in D$
equations \eqref{eqdefnonhomgenerator} hold for all $s$ outside $S$,
where the derivatives exist in the Banach topology of
$B$. In particular, if the operators $A_t$ depend strongly continuously on $t$ (as bounded operators $D\to B$),
this implies that equations
\eqref{eqdefnonhomgenerator} hold for all $s$ and $f\in D$,
where for $s=t$ (resp. $s=r$) it is assumed to be only a
right (resp. left) derivative.

 For a Banach space $B$ or a linear operator $A$ one usually denotes by
$B^{\star}$ or $A^{\star}$ its {\it Banach dual} (space or operator respectively). Alternatively
the notations $B'$ and $A'$ are in use.

\begin{theorem}({\bf Basic duality})\index{duality}
\label{thdualpropag}

 Let $\{U^{t,r}\}$ be a strongly continuous
backward propagator of bounded linear operators in a Banach space
$B$ with a common invariant domain $D$, which is itself a Banach space
with the norm $\| \, \|_D \ge \| \, \|_B$, and let the family $\{A_t\}$ of bounded linear operators $D\to B$
generate $\{U^{t,r}\}$ on $D$. Then

(i) the family of dual
operators $V^{s,t}=(U^{t,s})^{\star}$ forms a weakly-${\star}$ continuous in
$s,t$ propagator of bounded linear operators in $B^{\star}$
(contractions if all $U^{t,r}$ are contractions) such that
\begin{equation}
\label{eqdefnonhomgeneratordual}
  \frac{d}{dt} V^{s,t} \xi = -V^{s,t}A_t^{\star} \xi,
 \quad
 \frac{d}{ds} V^{s,t} \xi = A_s^{\star}V^{s,t} \xi, \quad t \le s,
\end{equation}
hold weakly-$\star$ in $D^{\star}$, i.e., say, the second equation means
\begin{equation}
\label{eqdefnonhomgeneratordual1}
 \frac{d}{ds}(f, V^{s,t} \xi)
  =(A_sf, V^{s,t} \xi), \quad t \le s, \quad f\in D;
\end{equation}

(ii) $V^{s,t} \xi$ is the unique solution to the Cauchy problem of
equation \eqref{eqdefnonhomgeneratordual1} in $B^{\star}$, i.e. if $\xi_t=\xi$ for
a given $\xi \in B^{\star}$ and $\xi_s$, $s\in [t,r]$, is a weakly-${\star}$
continuous family in $B^{\star}$ satisfying
\begin{equation}
\label{eqdefnonhomgeneratordual2}
 \frac{d}{ds}(f, \xi_s)
  =(A_sf, \xi_s), \quad t \le s \le r, \quad f\in D,
\end{equation}
then $\xi_s=V^{s,t}\xi$ for $t\le s \le r$.

(iii) $U^{s,r}f$ is the unique solution to the inverse Cauchy
problem of the second equation in \eqref{eqdefnonhomgenerator}.
\end{theorem}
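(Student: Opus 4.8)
The plan is to obtain part (i) by straightforward dualization, and both uniqueness statements (ii) and (iii) by the same bilinear ``duality trick'': pairing a solution of one equation against a solution of the adjoint equation and showing that the pairing is constant in time.

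For (i), the propagator identity is immediate from $(U^{t,s}U^{s,r})^{\star}=(U^{s,r})^{\star}(U^{t,s})^{\star}$, which for $t\le s\le r$ reads $V^{r,t}=V^{r,s}V^{s,t}$, while $V^{t,t}=I$ and $\|V^{s,t}\|=\|U^{t,s}\|$ transfer the uniform bounds and the contraction property. Weak-$\star$ continuity follows because $(f,V^{s,t}\xi)=(U^{t,s}f,\xi)$ and $U^{t,s}f$ is norm-continuous in $(t,s)$ by strong continuity. The differential equations \eqref{eqdefnonhomgeneratordual} are then read off in weak-$\star$ form: differentiating $(f,V^{s,t}\xi)=(U^{t,s}f,\xi)$ in $s$ and invoking the first equation in \eqref{eqdefnonhomgenerator} gives $\frac{d}{ds}(f,V^{s,t}\xi)=(U^{t,s}A_sf,\xi)=(A_sf,V^{s,t}\xi)$, which is \eqref{eqdefnonhomgeneratordual1}; differentiating instead in $t$ and using $\frac{d}{dt}U^{t,s}f=-A_tU^{t,s}f$ (the second equation in \eqref{eqdefnonhomgenerator}, legitimate since $U^{t,s}f\in D$ by invariance) yields the first equation in \eqref{eqdefnonhomgeneratordual} once $V^{s,t}A_t^{\star}\xi$ is read through the pairing $(U^{t,s}f,A_t^{\star}\xi)=(A_tU^{t,s}f,\xi)$. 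These are routine once one tracks which index is being differentiated.

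For (ii), fix $\tau\in[t,r]$ and $g\in D$ and consider the scalar function $h(s)=(U^{s,\tau}g,\xi_s)$ on $[t,\tau]$. Differentiating the two factors and using, on one side, $\frac{d}{ds}U^{s,\tau}g=-A_sU^{s,\tau}g$ and, on the other, the hypothesis \eqref{eqdefnonhomgeneratordual2} applied to the frozen test vector $f=U^{s,\tau}g\in D$, the two contributions cancel, $h'(s)=-(A_sU^{s,\tau}g,\xi_s)+(A_sU^{s,\tau}g,\xi_s)=0$. Hence $h$ is constant, so $(g,\xi_\tau)=h(\tau)=h(t)=(U^{t,\tau}g,\xi)=(g,V^{\tau,t}\xi)$; as this holds for every $g$ in the dense set $D$ and both $\xi_\tau$ and $V^{\tau,t}\xi$ lie in $B^{\star}$, we conclude $\xi_\tau=V^{\tau,t}\xi$. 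Statement (iii) is the same argument carried out entirely inside $B$: for a solution $f_s$ of $\frac{d}{ds}f_s=-A_sf_s$ with $f_r=f$, fix $\sigma$ and set $\psi(s)=U^{\sigma,s}f_s$ on $[\sigma,r]$; the first equation in \eqref{eqdefnonhomgenerator} together with the equation for $f_s$ makes the product rule collapse, $\psi'(s)=U^{\sigma,s}A_sf_s-U^{\sigma,s}A_sf_s=0$, whence $f_\sigma=\psi(\sigma)=\psi(r)=U^{\sigma,r}f$.

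The main obstacle is justifying the product-rule differentiation in (ii), where the two factors are differentiable in genuinely different senses: $U^{s,\tau}g$ is norm-differentiable in $B$, whereas $\xi_s$ is only weakly-$\star$ differentiable and weakly-$\star$ continuous. I would handle this through the difference quotient $h(s+\ep)-h(s)=(U^{s+\ep,\tau}g-U^{s,\tau}g,\xi_{s+\ep})+(U^{s,\tau}g,\xi_{s+\ep}-\xi_s)$: in the first term a norm-convergent vector is paired with a uniformly bounded, weakly-$\star$ convergent family (the bound $\sup_s\|\xi_s\|<\infty$ coming from Banach--Steinhaus applied to the weakly-$\star$ continuous $\xi_s$ on the compact interval), so the pairing passes to the limit, while the second term is exactly the hypothesized scalar derivative with $f$ held fixed. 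Two further technical points need care: the equations in \eqref{eqdefnonhomgenerator} hold only for $s$ outside a common null set, so all the pointwise identities above are a.e. statements; and to pass from $h'=0$ a.e. to $h$ constant one needs $h$ absolutely continuous, which is obtained by writing the defining equations in their equivalent integrated (Bochner) form. Finally, the density of $D$ in $B$ --- implicit in calling $D$ a domain --- is what upgrades the weak-$\star$ identities to genuine equalities in $B^{\star}$.
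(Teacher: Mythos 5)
Your proposal is correct and follows essentially the same route as the paper: part (i) by direct dualization, and parts (ii) and (iii) by showing that the pairing $(U^{s,r}f,\xi_s)$ (respectively $U^{\sigma,s}f_s$) is constant in $s$, which is exactly the paper's argument. The only cosmetic difference is that you justify the product rule with a two-term splitting, pairing the norm-convergent difference quotient of $U$ against the uniformly bounded, weakly-$\star$ convergent $\xi_{s+\ep}$, whereas the paper uses a three-term splitting whose cross term is killed by the same compactness-plus-weak-$\star$-continuity observation; your explicit remarks on the a.e.\ validity of \eqref{eqdefnonhomgenerator}, absolute continuity via the integrated form, and the density of $D$ in $B$ make precise points the paper leaves implicit.
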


\begin{proof} Statement (i) is a direct consequence of duality.

(ii) Let
$g(s)=(U^{s,r}f,\xi_s)$ for a given $f\in D$. Writing
\[
(U^{s+\delta,r}f,\xi_{s+\delta})-(U^{s,r}f,\xi_s)
\]
\[
=(U^{s+\delta,r}f-U^{s,r}f,\xi_s)
 +(U^{s,r}f,\xi_{s+\delta}-\xi_s)
 \]
 \[
 +(U^{s+\delta,r}f-U^{s,r}f,\xi_{s+\delta}-\xi_s)
 \]
 and using \eqref{eqdefnonhomgenerator}, \eqref{eqdefnonhomgeneratordual1}
 and the invariance of $D$,
 allows one to conclude that
 \[
 \frac{d}{ds}g(s)=-(A_sU^{s,r}f,\xi_s)+(U^{s,r}f,A^{\star}_s\xi_s)=0,
 \]
 because a.s. in $s$
 \[
 \left(\frac{U^{s+\delta,r}f-U^{s,r}f}{\delta},\xi_{s+\delta}-\xi_s\right) \to 0,
 \]
 as $\de \to 0$ (since the family $\de^{-1}(U^{s+\delta,r}f-U^{s,r}f)$ is relatively compact, being convergent,
 and $\xi_s$ is weakly continuous).
 Hence $g(r)=(f,\xi_r)=g(t)=(U^{t,r}f,\xi_t)$,
  showing that $\xi_r$ is uniquely defined.

(iii) is proved similar to (ii).
\end{proof}

\begin{remark}
\label{remarktothdualpropag}
In addition to the statement of Theorem \ref{thdualpropag} let us note
(as one sees directly from duality), that
(i) $V^{s,t} \xi$ depend weakly-$\star$ continuous on $s,t$ uniformly for bounded $\xi$
and (ii) $V^{s,t}$ is a weakly-$\star$ continuous operator, that is $\xi_n \to \xi$ weakly-$\star$
implies $V^{s,t}\xi_n \to V^{s,t}\xi$ weakly-$\star$.
\end{remark}

\begin{remark}
\label{remarktothdualpropag}
Working with discontinuous $A_t$ is crucial for the development of the related theory
of SDE with nonlinear noise, see \cite{Ko08a} and  \cite{Ko10}. In this paper we shall use only
continuous families of generators $\{A_t\}$.
\end{remark}

We deduce now some corollaries of Theorem \ref{thdualpropag}: on the extension of the operators $V^{s,t}$ to $D^{\star}$,
and on their stability with respect to a perturbation of the family $A_t$.

 \begin{theorem}
\label{thdualpropagext}
Under the assumptions of Theorem \ref{thdualpropag} suppose additionally that

(i) $\{U^{t,s}\}$ is a strongly continuous backward propagator of uniformly bounded operators in $D$;

(ii) there exists another subspace $\tilde D\subset D$, dense in $D$, which is itself a Banach space
with the norm $\| \, \|_{\tilde D} \ge \| \, \|_D$ such that the mapping $t\mapsto A_t$ is a continuous mapping
$t\to \LC(\tilde D,D)$;

(iii) $B^{\star}$ is dense in $D^{\star}$ (which holds automatically in case of reflexive $D$).

Then the operators $V^{s,t}: B^{\star} \to B^{\star}$ extend to the operators $V^{s,t}: D^{\star} \to D^{\star}$
forming a weakly-$\star$ continuous propagator in $D^{\star}$ that solves equation \eqref{eqdefnonhomgeneratordual1}
weakly-$\star$ in $\tilde D^{\star}$, that is, for any $\xi \in D^{\star}$, equation \eqref{eqdefnonhomgeneratordual1} holds for all $f\in \tilde D$.
\end{theorem}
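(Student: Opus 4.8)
The plan is to realize the extension concretely as a Banach-space dual and then verify the three required properties (propagator equation, weak-$\star$ continuity, and the evolution equation) one at a time, with the genuine content concentrated in the last. \textbf{Identifying the extension.} By assumption (i) each $U^{t,s}$ is a bounded operator on $D$, uniformly for $s,t$ in a compact set, so its dual $(U^{t,s})^{\star}$ is a bounded operator on $D^{\star}$ with $\|(U^{t,s})^{\star}\|_{D^{\star}\to D^{\star}}=\|U^{t,s}\|_{D\to D}$. First I would check that this operator extends the original $V^{s,t}$: since $\|\cdot\|_D\ge\|\cdot\|_B$ the inclusion $D\hookrightarrow B$ is bounded with dense range, so its dual embeds $B^{\star}$ into $D^{\star}$, and for $\xi\in B^{\star}$, $f\in D$ one has the one-line duality identity $(f,(U^{t,s})^{\star}\xi)=(U^{t,s}f,\xi)=(f,V^{s,t}\xi)$. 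Thus $(U^{t,s})^{\star}$ on $D^{\star}$ genuinely extends $V^{s,t}$, and I keep the name $V^{s,t}$ for it. Because these operators are uniformly bounded on $D^{\star}$ and $B^{\star}$ is dense in $D^{\star}$ by assumption (iii), this is the unique bounded extension, which is what makes the assertion well posed.

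\textbf{Propagator and continuity.} Next I would transport the structural properties through the duality. The chain rule $U^{t,s}U^{s,r}=U^{t,r}$ is now an identity of bounded operators on $D$; dualizing reverses the order and yields $V^{r,t}=V^{r,s}V^{s,t}$ together with $V^{t,t}=\mathrm{Id}$, i.e. the forward propagator equation on $D^{\star}$. For weak-$\star$ continuity I must show that $(s,t)\mapsto(f,V^{s,t}\xi)=(U^{t,s}f,\xi)$ is continuous for each $f\in D$ and $\xi\in D^{\star}$; this is immediate from the strong continuity and uniform boundedness of $U^{t,s}$ on $D$ (assumption (i)), since $U^{t,s}f\to U^{t_0,s_0}f$ in the $D$-norm while $\xi$ is $D$-continuous. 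It is precisely here that continuity in $D$, rather than merely in $B$ as in Theorem \ref{thdualpropag}, is needed.

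\textbf{The evolution equation in $\tilde D^{\star}$.} This is the main step. Fix $f\in\tilde D$ and $\xi\in D^{\star}$ and set $\xi_s=V^{s,t}\xi$, so that $(f,\xi_s)=(U^{t,s}f,\xi)$; I must differentiate $s\mapsto U^{t,s}f$ in a manner that can be paired against $\xi\in D^{\star}$, i.e. I need the derivative in the $D$-norm, not just in $B$ as provided by \eqref{eqdefnonhomgenerator}. Assumption (ii) is what makes this possible: for $f\in\tilde D$ one has $A_{\tau}f\in D$ and $\tau\mapsto A_{\tau}f$ is $D$-continuous, and combined with the strong continuity and uniform boundedness of $U^{t,\tau}$ on $D$ this shows $\tau\mapsto U^{t,\tau}A_{\tau}f$ is a continuous $D$-valued curve. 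The $B$-valued identity $\frac{d}{d\tau}U^{t,\tau}f=U^{t,\tau}A_{\tau}f$ then integrates to $U^{t,s}f=f+\int_t^s U^{t,\tau}A_{\tau}f\,d\tau$, where the integral may be read as a $D$-valued Bochner integral (both sides coincide in $B$, hence in $D$ since $D\hookrightarrow B$ is injective). The fundamental theorem of calculus for a continuous integrand now gives $\frac{d}{ds}U^{t,s}f=U^{t,s}A_sf$ in the $D$-norm, so pairing with $\xi\in D^{\star}$ is legitimate and yields $\frac{d}{ds}(f,\xi_s)=(U^{t,s}A_sf,\xi)=(A_sf,V^{s,t}\xi)=(A_sf,\xi_s)$ for every $f\in\tilde D$, which is \eqref{eqdefnonhomgeneratordual1} in the required $\tilde D^{\star}$ sense.

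\textbf{Expected obstacle.} The only genuinely delicate point is the upgrade of the generator equation from $B$-convergence to $D$-convergence of the difference quotient; everything else is duality bookkeeping. I expect the verification of the $D$-continuity of $\tau\mapsto U^{t,\tau}A_{\tau}f$ (jointly combining strong continuity of the $U$-flow on $D$ with the $D$-continuity of $A_{\tau}f$ supplied by (ii)) together with the passage from the $B$-valued derivative to the $D$-valued Bochner-integral representation to be where assumptions (i) and (ii) are really consumed, and hence the part requiring the most care.
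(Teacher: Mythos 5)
Your proof is correct, but its heart runs along a genuinely different line from the paper's. The parts you call bookkeeping (realizing the extension as $(U^{t,s}|_D)^{\star}$, its boundedness, the dual propagator identity, and weak-$\star$ continuity from assumption (i)) agree with the paper. For the evolution equation, however, the paper works entirely on the dual side: it starts from the integral identity \eqref{eqdefnonhomgeneratordual1int} for $\xi\in B^{\star}$, $f\in D$, uses assumption (iii) to choose $\xi_n\in B^{\star}$ converging to $\xi$ in the norm of $D^{\star}$, passes to the limit by dominated convergence (this is exactly where $A_sf\in D$ enters), and then differentiates, the integrand $s\mapsto (A_sf,V^{s,t}\xi)$ being continuous by (ii) and the weak-$\star$ continuity already established. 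You instead work on the primal side: for $f\in\tilde D$ you upgrade the generator equation $\frac{d}{ds}U^{t,s}f=U^{t,s}A_sf$ from a $B$-valued to a $D$-valued derivative (via $D$-continuity of $s\mapsto U^{t,s}A_sf$ from (i)--(ii) and the Bochner-integral/fundamental-theorem argument), after which pairing with $\xi\in D^{\star}$ gives the claim in one line. What your route buys: assumption (iii) is never invoked for the existence of the solution --- only to certify that the extension is the unique bounded one --- and you in effect establish the ``$U^{t,s}A_s$ half'' of generation in $D$ on $\tilde D$, a fact the paper extracts only in Theorem \ref{thdualpropagext1} under the additional hypothesis that $U^{t,s}$ is strongly continuous in $\tilde D$; your argument shows this half needs only (i) and (ii). What the paper's route buys: it never requires differentiating $U^{t,s}f$ in the $D$-norm at all, so it is the version that survives when such primal regularity is unavailable. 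One shared gloss: both your proof and the paper's pass from the a.e.-in-$s$ differential form \eqref{eqdefnonhomgenerator} to an integral form without verifying absolute continuity; since the paper asserts \eqref{eqdefnonhomgeneratordual1int} with the same silence, you are at the paper's level of rigor on this point, not below it.
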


\begin{proof}
The fact that $V^{s,t}$ extend to linear operators in $D^{\star}$ follows without any additional
assumption from the invariance of $D$ under $U^{t,s}$. Assumption (i) implies that this extension
is bounded and weakly-$\star$ continuous in $D^{\star}$. In order to prove that \eqref{eqdefnonhomgeneratordual1} holds for $f\in \tilde D$ and $\xi \in D^{\star}$, observe that
\begin{equation}
\label{eqdefnonhomgeneratordual1int}
(f, V^{r,t} \xi)=(f,\xi)+
  \int_t^r(A_sf, V^{s,t} \xi) \, ds
\end{equation}
for $\xi \in B^{\star}$, $f \in D$.
Now, for a $\xi \in D^{\star}$ and $f\in \tilde D$, let us pick up a sequence
 $\xi_n \in B^{\star}$ such that $\xi_n \to \xi$ in the norm topology of $D^{\star}$ as $n\to \infty$ (which is possible by assumption (iii)). As $A_s f\in D$ (by assumption(ii)), we can pass to the limit in
 \eqref{eqdefnonhomgeneratordual1int} with $\xi_n$ instead of $\xi$ (using dominated convergence) yielding
  \eqref{eqdefnonhomgeneratordual1int} for $\xi \in D^{\star}$ and $f\in \tilde D$. Finally,
  as $(A_sf, V^{s,t} \xi)$ is a continuous function of $s$ (by assumption (ii) and the weak-$\star$ continuity of $V^{s,t}$ in $D^{\star}$), equation \eqref{eqdefnonhomgeneratordual1int} implies
    \eqref{eqdefnonhomgeneratordual1} for $\xi \in D^{\star}$ and $f\in \tilde D$.
\end{proof}

 \begin{theorem}
\label{thdualpropagext1}
Under the assumptions of Theorem  \ref{thdualpropagext} assume additionally that
the backward propagator $\{U^{t,s}\}$ in $D$ is generated by $\{A_t\}$ on the invariant domain $\tilde D$
(in particular $\tilde D$ is invariant and equations \eqref{eqdefnonhomgenerator} hold in
the norm topology of $D$ for any $f\in \tilde D$). Then
$V^{s,t}\xi$ represents the unique weakly-$\star$ continuous in $D^{\star}$ solution of equation \eqref{eqdefnonhomgeneratordual1}
in $\tilde D^{\star}$. Moreover, for the propagator $\{U^{t,s}\}$ in $D$ to be generated by $\{A_t\}$ on $\tilde D$
it is sufficient to assume that $\{U^{t,s}\}$ is a strongly continuous family of bounded operators in $\tilde D$.
\end{theorem}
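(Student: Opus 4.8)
The plan is to obtain the uniqueness assertion as a direct reapplication of Theorem~\ref{thdualpropag} with the pair of Banach spaces $(B,D)$ replaced by $(D,\tilde D)$, and to obtain the concluding sufficiency remark by a bootstrap argument that promotes the defining equations \eqref{eqdefnonhomgenerator} from the norm of $B$ to the norm of $D$.

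For the uniqueness statement I would first check that the hypotheses of Theorem~\ref{thdualpropag} are met for the triple $(D,\tilde D,\{A_t\})$: by assumption~(i) of Theorem~\ref{thdualpropagext} the family $\{U^{t,s}\}$ is a strongly continuous backward propagator of bounded operators in $D$; by assumption~(ii) the space $\tilde D$ is a dense Banach subspace of $D$ with $\|\cdot\|_{\tilde D}\ge\|\cdot\|_D$ and $A_t$ maps $\tilde D$ into $D$; and the additional hypothesis of the present theorem is precisely that $\{A_t\}$ generates $\{U^{t,s}\}$ on the invariant domain $\tilde D$. Existence of a solution is already furnished by Theorem~\ref{thdualpropagext}, so only uniqueness is new, and I would run the argument of Theorem~\ref{thdualpropag}(ii) verbatim: given a weakly-$\star$ continuous solution $\xi_s\in D^{\star}$ of \eqref{eqdefnonhomgeneratordual1} for $f\in\tilde D$, set $g(s)=(U^{s,r}f,\xi_s)$ for a fixed $f\in\tilde D$, decompose the increment of $g$ into the three familiar terms, and show $g'(s)=0$. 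Here the second equation of \eqref{eqdefnonhomgenerator} in the norm of $D$ handles the first term (it applies because generation now holds on $\tilde D$ in the $D$-norm), the weak-$\star$ equation handles the second term (it applies because $\tilde D$ is invariant, so $U^{s,r}f\in\tilde D$ is an admissible test function), and the cross term vanishes since $\delta^{-1}(U^{s+\delta,r}f-U^{s,r}f)$ converges in $D$ while $\xi_{s+\delta}-\xi_s\to0$ weakly-$\star$. Constancy of $g$ then yields $(f,\xi_r)=(U^{t,r}f,\xi)=(f,V^{r,t}\xi)$ for all $f\in\tilde D$; since $\tilde D$ is dense in $D$ and $\xi_r,V^{r,t}\xi\in D^{\star}$, this forces $\xi_r=V^{r,t}\xi$.

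For the concluding sufficiency statement I would argue by integrating the generator equations. Since $\{A_t\}$ generates $\{U^{t,s}\}$ on $D$, integrating \eqref{eqdefnonhomgenerator} gives the Duhamel identities
\[
U^{s,r}f=f+\int_s^r A_\tau U^{\tau,r}f\,d\tau,\qquad U^{t,s}f=f+\int_t^s U^{t,\tau}A_\tau f\,d\tau,
\]
valid in the norm of $B$ for $f\in D$. Now fix $f\in\tilde D$. Strong continuity of $\{U^{t,s}\}$ in $\tilde D$ makes $\tilde D$ invariant and renders $\tau\mapsto U^{\tau,r}f$ continuous into $\tilde D$; combined with the continuity $t\mapsto A_t\in\LC(\tilde D,D)$ of assumption~(ii), this makes $\tau\mapsto A_\tau U^{\tau,r}f$ continuous into $D$, so the first integral is a $D$-valued integral of a $D$-continuous integrand. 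Likewise, assumption~(i) (strong continuity in $D$) together with assumption~(ii) makes $\tau\mapsto U^{t,\tau}A_\tau f$ continuous into $D$. Because the embedding $D\hookrightarrow B$ is continuous and injective, each $D$-valued integral maps onto the corresponding $B$-valued one, so both Duhamel identities in fact hold in the norm of $D$; differentiating them (fundamental theorem of calculus for a $D$-valued integral with continuous integrand) recovers \eqref{eqdefnonhomgenerator} in the norm of $D$ for $f\in\tilde D$, which is exactly the statement that $\{A_t\}$ generates $\{U^{t,s}\}$ on $\tilde D$.

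I expect the only real obstacle to be this passage from the $B$-norm to the $D$-norm in the second part: one must verify that the integrands are genuinely continuous in the stronger topology of $D$ (this is where the invariance of $\tilde D$ and the two continuity hypotheses are indispensable) and that the $D$-valued and $B$-valued integrals agree, so that the fundamental theorem of calculus may legitimately be applied in $D$ rather than merely in $B$. Everything in the first part is a routine transcription of the proof of Theorem~\ref{thdualpropag}(ii), the only points requiring care being the bookkeeping of which hypothesis licenses each of the three limit terms and the density of $\tilde D$ in $D$ needed for the final identification in $D^{\star}$.
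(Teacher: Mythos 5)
Your proposal is correct and takes essentially the same route as the paper: the uniqueness assertion is obtained by applying Theorem \ref{thdualpropag} to the pair of spaces $(\tilde D, D)$ (you merely unfold the verification and rerun the argument of part (ii), which the paper leaves to a citation), and the sufficiency claim is proved by rewriting \eqref{eqdefnonhomgenerator} in integral (Duhamel) form and showing that the integrands $s\mapsto A_s U^{s,r}f$ and $s\mapsto U^{t,s}A_s f$ are continuous as $D$-valued functions, using strong continuity of $U^{s,r}$ in $\tilde D$ and continuity of $s\mapsto A_s$ in $\LC(\tilde D,D)$, exactly as the paper does. Your extra remark that the $D$-valued and $B$-valued integrals agree is a detail the paper treats as implicit.
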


\begin{proof}
The first statement is a direct consequence of Theorem \ref{thdualpropag} applied to the pair
of spaces $\tilde D, D$. The last statement is proved as in the previous theorem.
Namely, we first rewrite equation \eqref{eqdefnonhomgenerator} in the integral form, i.e. as
  \begin{equation}
\label{eqdefnonhomgeneratorint}
U^{t,r} f =f+ \int_t^r A_s U^{s,r}f \, ds,
\quad
 U^{t,r} f =f+ \int_t^r U^{t,s} A_s f \, ds.
\end{equation}
These equations would imply \eqref{eqdefnonhomgenerator} with the derivative defined in the norm topology of $D$,
for $f\in \tilde D$, if we can prove that the functions $A_s U^{s,r}f$ and $U^{t,s} A_s f$ are continuous
functions $s\mapsto D$. To see that this is true, say for the first function, we can write
\[
A_{s+\de} U^{s+\de,r}f-A_s U^{s,r}f
=A_{s+\de} (U^{s+\de,r}f-U^{s,r}f)+ (A_{s+\de}-A_s) U^{s,r}f.
\]
The first term tends to zero in the norm topology of $D$, as $\de \to 0$, by
the strong continuity of $U^{s,r}$ in $\tilde D$, and the second term tends to zero by the continuity
of the family $A_s$ (assumption (ii) of Theorem \ref{thdualpropagext}).
\end{proof}

We conclude this section with a simple result on the convergence of propagators.

\begin{theorem}
\label{thconvergepropagators}
Suppose we are given a sequence of backward propagators
$\{U^{t,r}_n\}$, $n=1,2,...$, generated by the families  $\{A^n_t\}$
and a backward propagator $\{U^{t,r}\}$ generated by the family $\{A_t\}$. Let all these propagators
satisfy the same conditions as $U^{t,r}$ and $A_t$ from Theorem
\ref{thdualpropag} with the same $D$, $B$. Suppose also that all $U^{t,r}$
are uniformly bounded as operators in $D$.

Assume finally that, for any $t$ and any $f\in D$,  $A_t^n f$ converge to $A_t f$, as $n\to \infty$,
in the norm topology of $B$. Then  $U^{t,r}_n$ converges to $U^{t,r}$
strongly in $B$.
Moreover,
\begin{equation}
\label{eq4thconvergepropagators}
\|(V^{r,t}_n-V^{r,t})\xi \|_{D^{\star}} \le c \|A_s^n -A_s\|_{D\to B}\|\xi\|_{B^{\star}}.
\end{equation}
\end{theorem}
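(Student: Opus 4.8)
The plan is to compare the two propagators through a Duhamel-type telescoping identity, mirroring the product-differentiation trick used in the proof of Theorem \ref{thdualpropag}(ii). Fix $t\le r$ and $f\in D$ and consider the $B$-valued function $\Phi(s)=U^{t,s}_n U^{s,r}f$ on $[t,r]$. Since $U^{t,t}_n$ and $U^{r,r}$ are the identity, its endpoint values are $\Phi(t)=U^{t,r}f$ and $\Phi(r)=U^{t,r}_n f$. I would differentiate $\Phi$ via the splitting
\[
\Phi(s+\de)-\Phi(s)=U^{t,s+\de}_n(U^{s+\de,r}f-U^{s,r}f)+(U^{t,s+\de}_n-U^{t,s}_n)U^{s,r}f,
\]
letting $\de\to0$: the first summand tends to $-U^{t,s}_n A_s U^{s,r}f$ (by the second equation of \eqref{eqdefnonhomgenerator} for $\{U^{t,r}\}$, together with the strong continuity and uniform boundedness of $U^{t,\cdot}_n$ in $B$, exactly as in the relative-compactness step of Theorem \ref{thdualpropag}), while the second tends to $U^{t,s}_n A^n_s U^{s,r}f$ (by the first equation of \eqref{eqdefnonhomgenerator} for $\{U^{t,r}_n\}$, applicable since $U^{s,r}f\in D$ by invariance of $D$). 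Thus a.s. in $s$ one has $\Phi'(s)=U^{t,s}_n(A^n_s-A_s)U^{s,r}f$, and integration yields the basic identity
\[
U^{t,r}_n f-U^{t,r}f=\int_t^r U^{t,s}_n(A^n_s-A_s)U^{s,r}f\,ds,\qquad f\in D.
\]

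For the strong convergence in $B$ I would first treat $f\in D$. For each fixed $s$ the integrand tends to $0$ in $B$, because $U^{s,r}f\in D$ and $A^n_s g\to A_s g$ in $B$ for every $g\in D$, while $U^{t,s}_n$ is uniformly bounded in $B$ by some constant $M$. To pass the limit under the integral I would dominate the integrand by $M\,(\sup_n\|A^n_s\|_{D\to B}+\|A_s\|_{D\to B})\,\|U^{s,r}f\|_D$, where the pointwise-in-$s$ finiteness of $\sup_n\|A^n_s\|_{D\to B}$ follows from the Banach--Steinhaus theorem and $\|U^{s,r}f\|_D$ is bounded by the assumed uniform boundedness of $\{U^{t,r}\}$ in $D$; dominated convergence then gives $U^{t,r}_n f\to U^{t,r}f$ in $B$. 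The extension to arbitrary $f\in B$ is a routine $3\ep$ argument using the density of $D$ in $B$ and the uniform boundedness of the $U^{t,r}_n$ in $B$.

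For the quantitative estimate \eqref{eq4thconvergepropagators}, I would pair the basic identity with $\xi\in B^{\star}$ and recall that $V^{r,t}=(U^{t,r})^{\star}$ and $V^{r,t}_n=(U^{t,r}_n)^{\star}$, so that for $f\in D$
\[
(f,(V^{r,t}_n-V^{r,t})\xi)=\int_t^r (U^{t,s}_n(A^n_s-A_s)U^{s,r}f,\xi)\,ds.
\]
Estimating the integrand by $\|U^{t,s}_n\|_{B\to B}\,\|A^n_s-A_s\|_{D\to B}\,\|U^{s,r}\|_{D\to D}\,\|f\|_D\,\|\xi\|_{B^{\star}}$ and taking the supremum over $\|f\|_D\le1$ yields \eqref{eq4thconvergepropagators}, with $c$ proportional to $(r-t)$ times the uniform bounds of $U^{t,s}_n$ in $B$ and of $U^{s,r}$ in $D$ (here $\|A^n_s-A_s\|_{D\to B}$ is to be read as its supremum over $s$).

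I expect the main obstacle to be the rigorous justification of $\Phi'(s)$: the two generator relations in \eqref{eqdefnonhomgenerator} hold only almost surely in $s$, and the cross term in the difference quotient must be shown negligible, which is precisely the subtlety settled by the relative-compactness observation of Theorem \ref{thdualpropag}(ii) and by absolute continuity of $\Phi$ before invoking the fundamental theorem of calculus. A secondary technical point is securing an integrable dominating function in the strong-convergence step, for which the only delicate input is the local uniform-in-$n$ boundedness of $\|A^n_s\|_{D\to B}$ supplied by Banach--Steinhaus.
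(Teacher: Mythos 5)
Your proposal is correct and takes essentially the same route as the paper: the Duhamel identity you derive by differentiating $s\mapsto U^{t,s}_nU^{s,r}f$ is exactly the paper's key formula \eqref{eq3thconvergepropagators}, from which the paper likewise concludes strong convergence by density plus dominated convergence and obtains \eqref{eq4thconvergepropagators} by dualizing the same identity. You merely fill in details the paper leaves implicit (the a.e.\ justification of the derivative of the product and the Banach--Steinhaus bound used for domination), which only strengthens the argument.
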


\begin{proof} By the density argument (taking into account that $U^{t,r}_ng$ are uniformly bounded in $B$),
in order to prove the strong convergence of $U^{t,r}_n$ to $U^{t,r}$, it is sufficient to prove that
$U^{t,r}_ng$ converges to $U^{t,r}g$ for any $g\in D$. But if $g\in D$,
\begin{equation}
\label{eq3thconvergepropagators}
(U^{t,r}_n-U^{t,r})g= U^{t,s}_nU^{s,r}g\mid_{s=t}^r
=\int_t^rU^{t,s}_n(A_s^n-A_s)U^{s,r}g\,
 ds,
\end{equation}
which converges to zero in the norm topology of $B$ by the dominated convergence.
Estimate \eqref{eq4thconvergepropagators} also follows from \eqref{eq3thconvergepropagators}.
\end{proof}

\section{Perturbation theory for weak propagators}

 The main point of the perturbation theory is to build a propagator
generated by the family of operators $\{A_t+F_t\}$, when a propagator $U^{t,r}$ generated by
$\{A_t\}$ is given and $\{F_t\}$ are bounded. However, if $\{F_t\}$ are only bounded,
 then instead of the solutions to the equation
 \begin{equation}
\label{eqdefnonhomgenper}
 \frac{d}{ds}f
  =A_sf +F_s f, \quad t \le s \le r,
\end{equation}
with a given terminal $f_r$,
as desired, one can only construct the solutions to the so called {\it mild form} of this equation:
\begin{equation}
\label{eqgenmildequationnonhom}
 f_t=U^{t,r}f+\int_t^r U^{t,s}F_sf_s \, ds,
\end{equation}
which is only formally equivalent to \eqref{eqdefnonhomgenper} (i.e. when a solution to the mild equation is regular enough which may not be the case).

Let us recall the simplest perturbation theory result for propagators, which clarifies this issue
(a proof can be found e.g. in \cite{Ko11Markbook}, Theorem 1.9.3, and simpler, but similar fact for semigroups is discussed in almost any text book on functional analysis).

\begin{theorem}
\label{thperturbationtheoryforpropagators}
(i) Let $U^{t,r}$ be a strongly continuous backward propagator of bounded linear operators in
a Banach space $B$, and $\{F_t\}$
be a family of bounded operators in $B$ depending strongly continuous on $t$. Set
\begin{equation}
\label{eqperturbseriesforprop}
 \Phi^{t,r}=U^{t,r}
 +\int_t^r U^{t,s}F_sU^{s,r} \, ds
 +\sum_{m=1}^{\infty} \int_{t\le s_1\le \cdots \le s_m\le r}
 U^{t,s_1}F_{s_1}U^{s_1,s_2}  \cdots  F_{s_m}U^{s_m,r}\, ds_1  \cdots  ds_m.
 \end{equation}
It is claimed that this series converges in $B$ and the family
$\{\Phi^{t,r}\}$ also forms a strongly continuous propagator of bounded operators in $B$
such that $f_t=\Phi^{t,s} f$ is the unique solution to equation \eqref{eqgenmildequationnonhom}.

(ii) Suppose additionally that a family of linear operators $\{A_t\}$
 generates $\{U^{t,r}\}$ on the common invariant
domain $D$, which
is dense in $B$ and is itself a
Banach space under a norm $\|.\|_D\ge \|.\|_B$.
Suppose that $U^{t,r}$ and $\{F_t\}$ are also uniformly bounded operators in $D$. Then $D$ is
 invariant under $\{\Phi^{t,r}\}$ and
the family $\{A_t+F_t\}$ generates $\{\Phi^{t,r}\}$ on $D$.
Moreover, series \eqref{eqperturbseriesforprop} also converges in the operator norms of $D$
 and operators $\Phi^{t,r}f$ are bounded as operators in the Banach space $D$.
\end{theorem}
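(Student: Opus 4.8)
The plan is to treat the series \eqref{eqperturbseriesforprop} as a time-ordered (Dyson) series and run the standard Duhamel perturbation argument, first on $B$ and then, under the extra hypotheses, on $D$. For part (i), I would fix the compact parameter interval and set $M=\sup\|U^{t,s}\|_{B\to B}$ and $K=\sup_s\|F_s\|_{B\to B}$. On the simplex $\{t\le s_1\le\cdots\le s_m\le r\}$, whose volume is $(r-t)^m/m!$, the integrand of the $m$-th term has norm at most $M^{m+1}K^m$, so that term is bounded by $M^{m+1}K^m(r-t)^m/m!$ and the series converges in operator norm with $\|\Phi^{t,r}\|_{B\to B}\le Me^{KM(r-t)}$. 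Strong continuity of the integrand $s\mapsto U^{t,s}F_sU^{s,r}f$ follows by splitting differences into one piece controlled by strong continuity of $U$ (with $F$ bounded) and one piece controlled by strong continuity of $F$ applied to a fixed vector, so each term is a genuine Bochner integral. The key algebraic step is to reindex the series to obtain the two Duhamel identities
\begin{equation*}
\Phi^{t,r}=U^{t,r}+\int_t^r U^{t,s}F_s\Phi^{s,r}\,ds=U^{t,r}+\int_t^r \Phi^{t,s}F_sU^{s,r}\,ds,
\end{equation*}
each obtained by peeling off the first (resp. last) factor $F$ and recognizing the remaining iterated integral as $\Phi$. The first identity applied to $f$ is exactly the mild equation \eqref{eqgenmildequationnonhom} with $f_t=\Phi^{t,r}f$.

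For the remaining assertions of (i), I would first prove uniqueness of \eqref{eqgenmildequationnonhom} by subtracting two solutions: the difference $h_t$ satisfies $h_t=\int_t^r U^{t,s}F_sh_s\,ds$, so iterating gives $\|h_t\|\le (\sup_s\|h_s\|)\,(KM(r-t))^m/m!\to 0$, forcing $h\equiv 0$. The chain rule $\Phi^{t,s}\Phi^{s,r}=\Phi^{t,r}$ I would get directly by splitting each simplex in $[t,r]$ at the point $s$ according to how many insertion times fall in $[t,s]$ versus $[s,r]$ and using the propagator property $U^{\cdot,s}U^{s,\cdot}=U^{\cdot,\cdot}$ to factor the connecting propagator; summing over the split reconstructs the product of the two series. (Alternatively, uniqueness of the mild equation yields the chain rule.) Then $\Phi^{t,t}=\mathrm{id}$ and joint strong continuity in $t,r$ follow from the Duhamel identities and the uniform norm bounds.

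For part (ii), since $U^{t,s}$ and $F_s$ are uniformly bounded on $D$, the identical simplex estimate with $\|\cdot\|_{B\to B}$ replaced by $\|\cdot\|_{D\to D}$ shows \eqref{eqperturbseriesforprop} converges in the operator norm of $D$; hence each $\Phi^{t,r}$ is bounded on $D$ and $D$ is invariant. For generation I would differentiate the two Duhamel identities on $D$. Differentiating the second identity in the upper index $r$ (for $f\in D$), using $\frac{d}{dr}U^{s,r}f=U^{s,r}A_rf$ from \eqref{eqdefnonhomgenerator} and the Leibniz boundary term $\Phi^{t,r}F_rf$, gives $\frac{d}{dr}\Phi^{t,r}f=\Phi^{t,r}(A_r+F_r)f$; this computation stays entirely in $B$. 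Differentiating the first identity in the lower index $s$, using $\frac{d}{ds}U^{s,u}g=-A_sU^{s,u}g$ and the boundary term $-F_s\Phi^{s,r}f$, gives $\frac{d}{ds}\Phi^{s,r}f=-(A_s+F_s)\Phi^{s,r}f$. Together these are precisely the generation equations \eqref{eqdefnonhomgenerator} for the family $\{A_t+F_t\}$ and the propagator $\{\Phi^{t,r}\}$.

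The real work, and the main obstacle, lies in the lower-index equation of part (ii) rather than in the combinatorics of the series. To legitimately write $-A_s\int_s^r U^{s,u}F_u\Phi^{u,r}f\,du$ I must know that this is a $D$-valued Bochner integral, so that the bounded map $A_s\colon D\to B$ may be pulled through it; this needs $F_u\Phi^{u,r}f\in D$ (from the $D$-invariance just established together with boundedness of $F_u$ on $D$) and continuity of the integrand in the $\|\cdot\|_D$-norm. Verifying this $D$-norm continuity, justifying the Leibniz differentiation, and carrying out the attendant dominated-convergence steps so that the resulting $B$-norm derivatives genuinely exist is where the delicate estimates are concentrated.
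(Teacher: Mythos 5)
The paper does not actually prove this theorem: it is recalled from the literature, with the proof delegated to Theorem 1.9.3 of \cite{Ko11Markbook}, so there is no internal proof to compare yours against. Judged on its own, your Dyson-series/Duhamel scheme is the standard argument for such a statement, and your part (i) is essentially complete and correct: the simplex-volume bound, the reindexing that produces the two Duhamel identities, the Gronwall-type iteration for uniqueness, and the chain rule by splitting the simplex at $s$ are all sound.

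Part (ii), however, contains a genuine gap, which you flag honestly but do not close --- and which cannot be closed along the route you propose from the hypotheses as stated. The ingredient you say you need, continuity of $u\mapsto U^{s,u}F_u\Phi^{u,r}f$ in the $\|\cdot\|_D$-norm, is simply not available: part (ii) assumes only that $U^{t,r}$ and $\{F_t\}$ are uniformly \emph{bounded} in $D$, with no strong continuity in $D$ whatsoever. The same defect already undermines your first step of part (ii): to assert that the iterated integrals in \eqref{eqperturbseriesforprop} converge ``in the operator norm of $D$'' you must first know that they are $D$-valued Bochner integrals, i.e.\ that the integrands are strongly measurable as $D$-valued functions; $B$-continuity plus $D$-boundedness does not give this (nor even that the $B$-valued integral lands in $D$, unless, say, the unit ball of $D$ is closed in $B$). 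Every route to the lower-index equation $\frac{d}{ds}\Phi^{s,r}f=-(A_s+F_s)\Phi^{s,r}f$ --- your Leibniz differentiation, difference quotients via the propagator property, or integrated forms plus Lebesgue differentiation --- hits the same wall of pulling $A_s\colon D\to B$ through an integral whose integrand is only $B$-continuous. The standard fixes are either to add strong continuity of $U^{t,r}$ and $\{F_t\}$ in $D$ as a hypothesis (this is in effect what the paper has whenever it invokes part (ii): see the paragraph preceding Theorem \ref{thperturbweak}, where the result is applied to the pair $(D,\tilde D)$ under the assumptions of Theorem \ref{thdualpropagext1}, which do include strong continuity in the smaller space), or to assume each $A_s$ is closed as an operator in $B$ with domain $D$ and invoke a Hille-type theorem to commute it with the Bochner integral. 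Without one of these additions, your plan for (ii) stalls exactly at the step you identified.
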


We presented this theorem, because for the sensitivity analysis of nonlinear equations we shall need non-homogeneous extensions of equations \eqref{eqdefnonhomgeneratordual2} of the form
\begin{equation}
\label{eqdefnonhomgeneratordual3}
 \frac{d}{ds}(f, \xi_s)
  =(A_sf, \xi_s) +(F_s f,\xi_s), \quad t \le s \le r,
\end{equation}
where $F_s$ is a family of operators bounded in $D$, but, what is crucial and necessitates
technical complications, not bounded in $B$.

Under the assumption of Theorem \ref{thdualpropagext1} and assuming $\{F_t\}$ is a bounded strongly continuous
family of operators in $D$, it follows directly from Theorem \ref{thperturbationtheoryforpropagators} (ii)
applied to the pair of Banach spaces $(D, \tilde D)$ that the perturbation theory propagator
\eqref{eqperturbseriesforprop} solves equation \eqref{eqdefnonhomgenper} in $D$ and is generated
on $\tilde D$ by the family $\{A_t+F_t\}$. Hence, by Theorem \ref{thdualpropag}, the dual
propagator $\{\Psi^{r,t}=(\Phi^{t,r})'\}$ is weakly-$\star$ continuous in $D^{\star}$ and yields
a unique solution to  \eqref{eqdefnonhomgeneratordual3} in $\tilde D^{\star}$ (i.e. so that,
for $\xi_s=\Psi^{s,t}\xi_t$, equation \eqref{eqdefnonhomgeneratordual3} holds for all $f\in \tilde D$).

 The next result proves the same fact, except for uniqueness, under weaker assumptions of
 Theorem \ref{thdualpropagext}.

 \begin{theorem}
\label{thperturbweak}
 Under the assumptions of Theorem \ref{thdualpropagext} assume $\{F_t\}$ is a bounded strongly continuous
family of operators in $D$. Let $\{\Phi^{t,r}\}$ be given by
\eqref{eqperturbseriesforprop}, which by Theorem \ref{thperturbationtheoryforpropagators} (i)
(applied to the pair of Banach spaces $(D, \tilde D)$) is a strongly continuous propagator
in $D$, and let $\{\Psi^{r,t}=(\Phi^{t,r})'\}$, which is clearly
a weakly-$\star$ continuous backward propagator in $D^{\star}$.
Then the curve $\xi_s=\Psi^{s,t}\xi_t$ solves equation \eqref{eqdefnonhomgeneratordual3} in $\tilde D^{\star}$
with a given terminal condition $\xi_t$, that is
 \eqref{eqdefnonhomgeneratordual3} holds for all $f\in \tilde D$.
 \end{theorem}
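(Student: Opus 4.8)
The plan is to reduce the weak-$\star$ (dual) assertion to a single primal integral identity in $D$, valid for test functions $f\in\tilde D$, and then dualize and differentiate. Concretely, I will first establish that for every $f\in\tilde D$,
\[
\Phi^{t,r} f = f + \int_t^r \Phi^{t,s}(A_s + F_s) f \, ds \quad \text{in } D, \qquad (\star)
\]
where the integrand is legitimate because $(A_s+F_s)f\in D$ and $s\mapsto\Phi^{t,s}(A_s+F_s)f$ is $D$-continuous (strong continuity of $\Phi$ in $D$ together with the $D$-continuity of $A_s f$ and $F_s f$). Pairing $(\star)$ with an arbitrary $\xi\in D^{\star}$ and using $\Psi^{r,t}=(\Phi^{t,r})'$ then turns it into
\[
(f, \Psi^{r,t}\xi) = (f, \xi) + \int_t^r [(A_s f, \Psi^{s,t}\xi) + (F_s f, \Psi^{s,t}\xi)] \, ds ,
\]
which is exactly the integral form of \eqref{eqdefnonhomgeneratordual3} for the curve $\xi_s=\Psi^{s,t}\xi$.

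Two ingredients will be in hand before the core computation. First, Theorem \ref{thperturbationtheoryforpropagators}(i), applied with $D$ in the role of the ambient Banach space, gives that $\Phi^{t,r}$ is a strongly continuous propagator in $D$ and supplies the two mild identities
\[
\Phi^{t,r} = U^{t,r} + \int_t^r U^{t,s}F_s\Phi^{s,r}\,ds, \qquad \Phi^{t,r} = U^{t,r} + \int_t^r \Phi^{t,s}F_s U^{s,r}\,ds ,
\]
the second following from the first by reindexing the series \eqref{eqperturbseriesforprop}. Second, the conclusion of Theorem \ref{thdualpropagext}, namely \eqref{eqdefnonhomgeneratordual1int} extended to $\xi\in D^{\star}$ and $f\in\tilde D$, dualizes to the integral-form generation of $U$ by $A$ on $\tilde D$,
\[
U^{t,r} f = f + \int_t^r U^{t,s} A_s f \, ds \quad \text{in } D, \quad f\in\tilde D, \qquad (\dagger)
\]
the passage from the weak to the strong $D$-identity being legitimate since $s\mapsto U^{t,s}A_s f$ is $D$-continuous, by strong continuity of $U$ in $D$ (assumption (i)) and continuity of $s\mapsto A_s f$ into $D$ (assumption (ii)).

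The core step is to derive $(\star)$ from the second mild identity and $(\dagger)$. I will substitute $(\dagger)$ with left endpoint $\sigma$ for $U^{\sigma,r}f$ inside $\int_t^r \Phi^{t,\sigma}F_\sigma U^{\sigma,r}f\,d\sigma$, apply Fubini on the triangle $t\le\sigma\le s\le r$ (justified by uniform $D$-boundedness of the operators over the compact parameter range and $D$-continuity of $A_s f$), and recognise the inner integral $\int_t^s \Phi^{t,\sigma}F_\sigma U^{\sigma,s}\,d\sigma$ as $\Phi^{t,s}-U^{t,s}$ via the second mild identity again. After also expanding $U^{t,r}f$ by $(\dagger)$, the two occurrences of $\int_t^r U^{t,s}A_s f\,ds$ cancel, leaving precisely $(\star)$.

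Finally, I will dualize $(\star)$ against $\xi\in D^{\star}$ to obtain the displayed integral form and differentiate in the upper limit: the integrand $s\mapsto (A_s f, \Psi^{s,t}\xi)+(F_s f, \Psi^{s,t}\xi)$ is continuous, since $A_s f$ and $F_s f$ are $D$-continuous while $\Psi^{s,t}\xi$ is weakly-$\star$ continuous and norm-bounded in $D^{\star}$, so the fundamental theorem of calculus yields \eqref{eqdefnonhomgeneratordual3} for every $s$ and every $f\in\tilde D$. I expect the main obstacle to be exactly the derivation of $(\star)$: under the hypotheses of Theorem \ref{thdualpropagext} one has only the integral (not the differential) generation $(\dagger)$ of $U$ on $\tilde D$, so Theorem \ref{thperturbationtheoryforpropagators}(ii) cannot be invoked directly, which is what Theorem \ref{thdualpropagext1} does and is also the reason uniqueness is lost here. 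The Fubini rearrangement above is the device that circumvents this, propagating the integral-form generation from $U$ to $\Phi$ without ever using a derivative in the norm of $D$.
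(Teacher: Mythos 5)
Your proof is correct, but it takes a genuinely different route from the paper's. The paper works entirely on the dual side: it dualizes the whole perturbation series \eqref{eqperturbseriesforprop} into the series \eqref{eqperturbseriesforpropdual} for $\Psi^{r,t}$ in terms of $V$ and $F'_s$ (with weak-$\star$ integrals, which is why the weak-$\star$ continuity remark after Theorem \ref{thdualpropag} is needed to make sense of them), and then differentiates that series term by term in $r$, the justification being uniform convergence of the resulting series of derivatives. You instead stay on the primal side: you regroup the series into the ``reversed'' mild identity $\Phi^{t,r}=U^{t,r}+\int_t^r\Phi^{t,s}F_sU^{s,r}\,ds$, insert the integral-form generation $U^{\sigma,r}f=f+\int_\sigma^r U^{\sigma,s}A_sf\,ds$ (which does hold as a $D$-valued identity for $f\in\tilde D$ under the hypotheses of Theorem \ref{thdualpropagext}, since $s\mapsto U^{\sigma,s}A_sf$ is $D$-continuous by assumptions (i)--(ii) there), and apply Fubini on the triangle $t\le\sigma\le s\le r$ to collapse everything into the closed identity $\Phi^{t,r}f=f+\int_t^r\Phi^{t,s}(A_s+F_s)f\,ds$; a single dualization and the fundamental theorem of calculus then finish the argument. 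Your route buys robustness: every integral is a $D$-valued Bochner integral of a continuous integrand, so you avoid both the weak-$\star$ integrals and the term-by-term differentiation of an infinite series (the step the paper leaves terse), and your closing observation correctly isolates why only existence, not uniqueness, survives under these weaker hypotheses. The paper's route buys brevity and an explicit structural formula: the dual series \eqref{eqperturbseriesforpropdual} exhibits $\Psi^{r,t}$ directly as a perturbation of $V^{r,t}$ by the dual operators $F'_s$, which is informative in its own right.
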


 \begin{proof}
 From duality and \eqref{eqperturbseriesforprop} it follows that
 \begin{equation}
\label{eqperturbseriesforpropdual}
 \Psi^{r,t}=V^{r,t}+\sum_{m=1}^{\infty} \int_{t\le s_1\le \cdots \le s_m\le r}
 V^{r,s_m} F'_{s_m} \cdots V^{s_2,s_1}F'_{s_1} V^{s_1,t}\, ds_1  \cdots  ds_m,
 \end{equation}
 where $F'_s$ are of course dual operators to $F_s$, and
 where the integral is understood in weak-$\star$ sense and the series converges in
 the norm-topology of $D^{\star}$ (we need to take into account Remark \ref{remarktothdualpropag} to see that the weak integral is well defined). To prove \eqref{eqdefnonhomgeneratordual3} for $f\in \tilde D$ we should now differentiate
 term by term the corresponding series $(f,\Psi^{r,t}\xi)$ with respect to $r$ using Theorem \ref{thdualpropagext}. This term-by-term
 differentiation is then justified by the fact that the series of derivatives
 \[
 (A_rf, V^{r,t}\xi_t) +\left[(F_rf, V^{r,t}\xi_t)+\int_t^r (A_rf, V^{r,s} F'_s V^{s,t})\, ds\right]
 +\cdots
 \]
 converges uniformly in $r$.
 \end{proof}

\section{$T$-products}

Here we shall recall the notion of $T$-products showing how they can be used to construct propagators
generated by families of operators each of which generates a sufficiently regular semigroup.

We shall work with three Banach spaces $B_0,B_1,B_2$ with the norms denoted by $\|\,\|_i$, $i=0,1,2$,
such that $B_0\subset B_1\subset B_2$, $B_0$ is dense in $B_1$, $B_1$ is dense in $B_2$ and $\|\,\|_0\ge \|\,\|_1 \ge \|\,\|_2$.

 Let $L_t :B_1 \mapsto B_2$, $t\ge 0$, be a family of
uniformly (in $t$) bounded operators such that the closure in
$B_2$ of each $L_t$ is the generator of a strongly continuous
semigroup of bounded operators in $B_2$. For a partition
$\Delta=\{0=t_0<t_1<...<t_N=t \}$ of an interval $[0,t]$ let us
define a family of operators $U_{\Delta}(\tau,s)$, $0\le s \le
\tau \le t$, by the rules
$$
U_{\Delta}(\tau,s)=\exp \{ (\tau-s)L_{t_j} \},
\quad t_j \le s \le \tau \le t_{j+1},
$$
$$
U_{\Delta}(\tau,r)=U_{\Delta}(\tau,s)U_{\Delta}(s,r), \quad
0\le r \le s \le \tau \le t.
$$
Let $\Delta t_j=t_{j+1}-t_j$ and $\delta (\Delta)=\max_j \Delta t_j$.
If the limit
\begin{equation}
\label{eq1secbasicTproduct}
U(s,r)f=\lim_{\delta (\Delta)\to 0} U_{\Delta} (s,r)f
\end{equation}
exists for some $f$ and all $0\le r \le s \le t$ (in the norm of
$B_2$), it is called the $T$-{\it product}\index{$T$-product}
(or {\it chronological exponent}) of
$L_t$ and is denoted by $T \exp \{\int_r^s L_{\tau} \, d\tau
\}f$. Intuitively, one expects the $T$-product to give a solution
to the Cauchy problem
\begin{equation}
\label{eq2secbasicTproduct}
{d \over dt}\phi =L_t \phi, \quad \phi_0=f,
\end{equation}
in $B_2$ with the initial conditions $f$ from $B_1$.

\begin{theorem}
\label{thbasicTproduct}
 Let a family $L_tf$, $t\ge 0$, of linear operators in $B_2$ be given such that

 (i) each $L_t$ generates a strongly continuous semigroup $e^{sL_t}$, $s\ge 0$, in $B_2$
  with invariant core $B_1$,

 (ii) $L_t$  are uniformly bounded operators $B_0\to B_1$ and $B_1\to B_2$,

 (iii) $B_0$ is also invariant under all $e^{sL_t}$ and these operators are uniformly bounded as operators in $B_0,B_1$, $B_2$, with the norms not exceeding $e^{Ks}$ with a constant $K$ (the same for all $B_j$ and $L_t$),

 (iv) $L_tf$, as a function $t\mapsto B_2$,
 depends continuously on $t$ locally uniformly
in $f$ (i.e. for $f$ from bounded subsets of $B_1$).

 Then

(i) the $T$-product $T \exp \{\int_0^s L_{\tau} \, d\tau \}f$ exists for all $f \in B_2$,
and the convergence in \eqref{eq1secbasicTproduct} is uniform in $f$ on any bounded subset of $B_1$;

(ii) if $f\in B_0$, then the approximations $U_{\Delta}(s,r)$ converge also in $B_1$;

(iii) this $T$-product defines a strongly continuous (in $t,s$)
family of uniformly bounded operators in both $B_1$ and $B_2$,

(iv) this $T$-product $T \exp \{\int_0^s L_{\tau} \, d\tau \}f$ is a solution of
problem \eqref{eq2secbasicTproduct} for any $f \in B_1$.
\end{theorem}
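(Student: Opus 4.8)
The plan is to prove existence of the $T$-product (claim (i)) by showing that the net $\{U_\De(s,r)f\}$ is Cauchy in $B_2$ as $\de(\De)\to 0$, and then to read off claims (ii)--(iv) by re-running the same estimate at one higher level of regularity, together with the uniform bounds (iii) and the density of $B_1$ in $B_2$. The central device is a telescoping comparison of two partition approximations. Fix $f\in B_1$ and two partitions $\De,\tilde\De$, and write $L^\De_\tau:=L_{t_j}$ for $\tau\in[t_j,t_{j+1})$ (the piecewise-constant generator attached to $\De$), and similarly $L^{\tilde\De}_\tau$. On each subinterval of the common refinement both step-generators are constant, so $\tau\mapsto U_{\tilde\De}(s,\tau)U_\De(\tau,r)f$ is differentiable there; crucially $U_\De(\tau,r)f\in B_1$ by invariance of the core $B_1$ in hypothesis (i), so the semigroup derivatives make sense as elements of $B_2$. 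Using $\frac{d}{d\tau}U_\De(\tau,r)f=L^\De_\tau U_\De(\tau,r)f$ together with $\frac{d}{d\tau}U_{\tilde\De}(s,\tau)g=-U_{\tilde\De}(s,\tau)L^{\tilde\De}_\tau g$ and summing across the finitely many breakpoints yields
\[
U_\De(s,r)f-U_{\tilde\De}(s,r)f=\int_r^s U_{\tilde\De}(s,\tau)\,(L^\De_\tau-L^{\tilde\De}_\tau)\,U_\De(\tau,r)f\,d\tau .
\]

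Bounding the $B_2$-norm by the uniform estimate $\|U_{\tilde\De}(s,\tau)\|_{B_2\to B_2}\le e^{K(s-\tau)}$ from (iii) reduces everything to controlling $\|(L^\De_\tau-L^{\tilde\De}_\tau)U_\De(\tau,r)f\|_2$. Here the remaining hypotheses enter: by (iii) the family $\{U_\De(\tau,r)f:\tau\in[r,s],\ \De\}$ lies in the fixed ball of radius $e^{Kt}\|f\|_1$ of $B_1$; writing $L^\De_\tau=L_{\si(\tau)}$, $L^{\tilde\De}_\tau=L_{\tilde\si(\tau)}$ with $|\si(\tau)-\tau|\le\de(\De)$ and $|\tilde\si(\tau)-\tau|\le\de(\tilde\De)$, the continuity hypothesis (iv) — continuity of $t\mapsto L_tg$ into $B_2$, uniform for $g$ in bounded subsets of $B_1$ — together with compactness of $[r,s]$ forces $\sup_\tau\|(L_{\si(\tau)}-L_{\tilde\si(\tau)})U_\De(\tau,r)f\|_2\to 0$ as $\de(\De),\de(\tilde\De)\to 0$. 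Thus the displayed difference tends to $0$, the net is Cauchy, and the limit exists in $B_2$ uniformly on bounded subsets of $B_1$. Extension to arbitrary $f\in B_2$ is the standard $\ep/3$ argument using density of $B_1$ in $B_2$ and $\|U_\De(s,r)\|_{B_2\to B_2}\le e^{Kt}$. I expect this Cauchy estimate — in particular justifying the termwise differentiation across partition points and the uniform use of (iv) on a single bounded $B_1$-set — to be the main obstacle; the rest is bookkeeping.

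For claim (ii) I would repeat the identity measuring in the $B_1$-norm: for $f\in B_0$ the orbit stays in a bounded $B_0$-set (invariance in (iii)), $L_t$ maps $B_0\to B_1$ boundedly by (ii), and $\|U_{\tilde\De}(s,\tau)\|_{B_1\to B_1}\le e^{Kt}$, so the same continuity-in-$t$ input gives a Cauchy estimate in $B_1$. Claim (iii) then follows by passing the uniform bounds $e^{Kt}$ and the chain rule $U_\De(s,\tau)U_\De(\tau,r)=U_\De(s,r)$ to the limit (using partitions that contain $\tau$), while strong continuity in $s,t$ comes from the uniform convergence on bounded sets.

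Finally, for claim (iv) I would pass to the limit in the integral form $U_\De(s,r)f=f+\int_r^s L^\De_\tau U_\De(\tau,r)f\,d\tau$. For $f\in B_0$ the integrand converges to $L_\tau U(\tau,r)f$ in $B_2$ — the orbit converges in $B_1$ by (ii) and $L_\tau$ is bounded $B_1\to B_2$, while the frozen-time error $(L_{\si(\tau)}-L_\tau)U_\De(\tau,r)f$ is killed by (iv) — giving $U(s,r)f=f+\int_r^s L_\tau U(\tau,r)f\,d\tau$. Density of $B_0$ in $B_1$, together with boundedness of $U(s,r)$ and $L_\tau$ on $B_1$, then extends this integral identity to all $f\in B_1$, and continuity of $\tau\mapsto L_\tau U(\tau,r)f$ in $B_2$ (again via (iv) and strong continuity of the orbit in $B_1$) upgrades it to the differential equation \eqref{eq2secbasicTproduct}.
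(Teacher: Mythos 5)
Your argument for claim (i) is precisely the paper's: the same telescoping identity
\[
U_{\Delta}(s,r)f-U_{\Delta'}(s,r)f
=\int_r^s U_{\Delta'}(s,\tau)\bigl(L_{[\tau]_{\Delta}}-L_{[\tau]_{\Delta'}}\bigr)U_{\Delta}(\tau,r)f\,d\tau,
\]
estimated in $B_2$ via the uniform bounds of hypothesis (iii) and the uniform continuity from hypothesis (iv) applied on the bounded $B_1$-set containing all orbits $U_{\Delta}(\tau,r)f$, followed by density of $B_1$ in $B_2$; your treatment of claims (iii)--(iv) (pass to the limit in the integral equation for $f\in B_0$, then use density of $B_0$ in $B_1$) also coincides with the paper's. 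The genuine gap is in claim (ii). You assert that for $f\in B_0$ ``the same continuity-in-$t$ input gives a Cauchy estimate in $B_1$.'' But measuring the telescoping identity in the $B_1$-norm requires making $\sup_\tau\|(L_{\sigma(\tau)}-L_{\tilde\sigma(\tau)})U_{\Delta}(\tau,r)f\|_1$ small, and the hypotheses do not provide this: hypothesis (iv) gives continuity of $t\mapsto L_t g$ only as a map into $B_2$ (uniformly for $g$ bounded in $B_1$), while hypothesis (ii) gives only that $L_t:B_0\to B_1$ is uniformly bounded, so the quantity above is bounded by a multiple of $\|f\|_0$ but in no way small. In a general triple of Banach spaces there is no interpolation converting ``small in $B_2$ and bounded in $B_1$'' into ``small in $B_1$,'' so your $B_1$-Cauchy estimate needs a hypothesis one level up from (iv) --- continuity of $t\mapsto L_t g\in B_1$ uniformly for $g$ in bounded subsets of $B_0$ --- which is not assumed.

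The paper proves (ii) by a different, compactness-based route that avoids any such hypothesis: from the integral equation $U_{\Delta}(s,r)f=f+\int_r^s L_{[\tau]_{\Delta}}U_{\Delta}(\tau,r)f\,d\tau$, the uniform boundedness of $L_t:B_0\to B_1$ and of $U_{\Delta}$ in $B_0$ show that the curves $\tau\mapsto U_{\Delta}(\tau,r)f$ are uniformly Lipschitz as $B_1$-valued functions; one then extracts a subsequence converging in $C([0,T],B_1)$ and identifies its limit with the $B_2$-limit already obtained in (i), whence the whole family converges in $B_1$. Note that the gap propagates: your proof of strong $B_1$-continuity in claim (iii) and your proof of claim (iv) (where the convergence $L_{[\tau]_{\Delta}}U_{\Delta}(\tau,r)f\to L_\tau U(\tau,r)f$ in $B_2$ is obtained from the $B_1$-convergence of the orbit) both rest on claim (ii). Replacing your argument for (ii) by the paper's compactness argument repairs the proof, with the rest of your reasoning unchanged.
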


\begin{proof}
(i) Since $B_1$ is dense
in $B_2$ and all $U_{\Delta}(s,r)$ are uniformly bounded in $B_2$ (by (iii)), the existence of the $T$-product for all $f\in B_2$ follows from its existence for $f\in B_1$. In the latter case it follows from the formula
$$
U_{\Delta}(s,r)-U_{\Delta'}(s,r)
=U_{\Delta'}(s,\tau)U_{\Delta}(\tau,r)|_{\tau =r}^{\tau =s}
=\int_r^s {d \over d\tau} U_{\Delta'} (s,\tau) U_{\Delta}(\tau,r)
\, d\tau
$$
$$
=\int_r^s U_{\Delta'}(s,\tau) (L_{[\tau]_{\Delta}}-L_{[\tau]_{\Delta' }})
U_{\Delta}(\tau,r) \, d\tau
$$
(where we denoted $[s]_{\Delta}=t_j$ for $t_j\le s <t_{j+1}$), because $L_t$  are uniformly continuous
 (condition (iv))
and $U_{\Delta}(s,r)$ are uniformly bounded in $B_2$ and $B_1$ (by condition (iii)).

(ii) If $f\in B_0$, then
the equations
\[
U_{\Delta}(s,r)
=\int_r^s L_{[\tau]_{\Delta }}
U_{\Delta}(\tau,r) \, d\tau,
\]
imply that the family $U_{\Delta}(s,t)$ is uniformly Lipschitz continuous in $B_1$ as a function of $t$,
because $L_s$  are uniformly bounded operators $B_0\to B_1$ and $U_{\Delta}(s,r)$ are uniformly bounded in $B_0$.
 Hence one can choose a subsequence, $U_{\Delta_n}(s,r)$, converging
in $C([0,T],B_1)$. But the limit is unique (it is the limit in $B_2$), implying the convergence of the whole family $U_{\Delta}(s,t)$, as $\de (\Delta) \to 0$.

(iii) It follows from (iii) that the limiting propagator is bounded. Strong continuity in $B_1$ is deduced first for $f\in B_0$ and
then for all $f\in B_1$ by the density argument.

(iv) If $f\in B_0$, we
can pass to the limit in the above approximate equations to obtain the equation
\[
U(s,r)f
=\int_r^s L_{\tau}
U(\tau,r) f \, d\tau.
\]
Since $B_0$ is dense in $B_1$, we then deduce the same equation for an arbitrary $f \in B_1$.
This implies that $U(s,r)f$ satisfies equation \eqref{eq2secbasicTproduct} by condition (iv) and the basic theorem of calculus.
\end{proof}

To conclude the section we present a rather general example of a non-homogeneous generator
of a strongly continuous Markov propagator specifying a time nonhomogeneous Feller process.
This will be a time-nonhomogeneous possibly degenerate
diffusion combined with a mixture of possibly degenerate stable-like
processes and processes generated
by the operators of order at most one, that is a process generated by an operator of the form
\[
 L_tf (x) =\frac{1}{2} {\tr} (\sigma_t (x) \sigma_t ^T(x) \nabla^2
 f(x))+(b_t(x), \nabla f(x))+\int (f(x+y)-f(x)) \nu_t (x,dy)
\]
\begin{equation}
\label{eqgendifstablelikedegenper}
 + \int_P (dp) \int_0^K d|y| \int_{S^{d-1}}
 a_{p,t}(x,s)\frac {f(x+y)-f(x)-(y,\nabla f(x))}{|y|^{\al_{p,t} (x,s)+1}}
 \om_{p,t}(ds).
\end{equation}
Here $s=y/|y|$, $K>0$ and $(P,dp)$ is a Borel space with a finite
measure $dp$ and $\om_{p,t}$ are certain finite Borel measures on
$S^{d-1}$.

\begin{prop}
\label{propstablelikeprocdegrep}
 Let the functions $\si, b,a, \al$ and the finite measure $|y| \nu (x,dy)$ be of smoothness class $C^5$ with respect to all variables (the measure is smooth in the weak sense), and
  $a_p,\al_p$ take
values in compact subintervals of $(0,\infty)$ and $(0,2)$
respectively. Then the family of operators $L_t$ of form
\eqref{eqgendifstablelikedegenper} generates a backward propagator $U_{t,s}$ on the invariant domain $C_{\infty}^2(\R^d)$,
and hence a unique
Markov process.
\end{prop}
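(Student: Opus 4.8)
The plan is to realize $\{L_t\}$ as a family to which the $T$-product construction of Theorem \ref{thbasicTproduct} applies, with the three-space scale
\[
B_0=C^4_{\infty}(\R^d)\subset B_1=C^2_{\infty}(\R^d)\subset B_2=C_{\infty}(\R^d),
\]
and then to read off the propagator from the resulting chronological exponential. The choice of indices is dictated by the orders involved: the principal (diffusion plus stable-like) part of $L_t$ is of order $2$, so $L_t$ maps $C^{k+2}_\infty\to C^k_\infty$; taking $k=0,2$ gives exactly the boundedness $B_1\to B_2$ and $B_0\to B_1$ required in hypothesis (ii) of Theorem \ref{thbasicTproduct}, while the assumed $C^5$ smoothness of the coefficients leaves one derivative to spare, which is what will make the $C^4$-estimates in hypothesis (iii) close. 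Hypothesis (iv), the local-uniform continuity of $t\mapsto L_tf$, is immediate by inspecting \eqref{eqgendifstablelikedegenper} term by term, since the data depend continuously (indeed $C^5$) on $t$ and the ranges of $a_{p,t}$ and $\alpha_{p,t}$ are confined to compact subintervals of $(0,\infty)$ and $(0,2)$, which makes the stable-like integral depend continuously on its parameters uniformly on bounded subsets of $C^2_\infty$.

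First I would verify hypothesis (i): for each fixed $t$ the operator $L_t$ generates a strongly continuous Feller semigroup $e^{sL_t}$ on $C_{\infty}(\R^d)$ with $C^2_\infty(\R^d)$ an invariant core. I would reduce this to the frozen-coefficient results of \cite{Ko10nonlbook}: freezing also the spatial variable $x$ turns each summand of \eqref{eqgendifstablelikedegenper} into a constant-coefficient L\'evy (L\'evy--Khintchine) generator, whose semigroup is an explicit convolution semigroup obtained by Fourier analysis, and the variable-coefficient semigroup is then built by the parametrix/freezing method of the monograph, the compactness of the range of $\alpha_{p,t}$ guaranteeing uniform control of the order of the singular kernel. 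Since $L_t\1=0$ and each $L_t$ has the L\'evy--Khintchine (Courr\`ege) form and so satisfies the positive maximum principle, the generated semigroups are conservative positive contractions, i.e. Markov.

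The main obstacle is hypothesis (iii): the invariance of $C^2_\infty$ and $C^4_\infty$ under $e^{sL_t}$ together with the uniform bound $\|e^{sL_t}\|_{C^k_\infty\to C^k_\infty}\le e^{Ks}$ for $k=0,2,4$, with $K$ independent of $t$. I would obtain these by differentiating the evolution $u_s=e^{sL_t}f$ up to four times in $x$ and applying Gronwall's inequality to the resulting system $\partial_s\partial^\beta u_s=L_t\partial^\beta u_s+[\partial^\beta,L_t]u_s$: the commutator $[\partial^\beta,L_t]$ produces, besides lower-order terms, one term of order $|\beta|+1$ whose coefficient is a derivative of the data of $L_t$, and it is precisely here that the extra order of smoothness ($C^5$ rather than $C^4$) is consumed in order to close the estimate. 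For the stable-like part the same bounds follow from its representation as a pseudo-differential operator with a symbol that is smooth in $x$ and of order $\alpha_{p,t}(x,s)<2$ uniformly, so that differentiation costs at most the declared number of derivatives; uniformity in $t$ comes from the uniform bounds on the coefficients and on $a_{p,t},\alpha_{p,t}$.

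With (i)--(iv) in hand, Theorem \ref{thbasicTproduct} provides the $T$-product $T\exp\{\int_r^s L_\tau\,d\tau\}$, a strongly continuous family of uniformly bounded operators on $C^2_\infty$ and $C_\infty$ that solves the Cauchy problem \eqref{eq2secbasicTproduct} for every $f\in C^2_\infty$; with the orientation convention of Section 2 this is the backward propagator $U^{t,s}$ on the invariant domain $C^2_\infty(\R^d)$, and by Theorem \ref{thdualpropag} its dual is the weakly-$\star$ continuous Markov propagator governing the measure evolution on $\MC(\R^d)$. Positivity and the contraction (indeed norm-preserving) property are inherited from the building blocks $e^{sL_{t_j}}$ through the products $U_\Delta$ and survive the limit, so $U^{t,s}$ is Markov. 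Finally, uniqueness of the associated Markov process follows from the uniqueness of the solution to the Cauchy problem on the core $C^2_\infty$ (equivalently, well-posedness of the martingale problem), which the $T$-product construction supplies.
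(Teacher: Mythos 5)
Your overall architecture coincides with the route the paper indicates: the paper's own ``proof'' is a one-line citation to \cite{Ko11Markbook}, saying only that the detailed argument uses several ingredients with Theorem \ref{thbasicTproduct} as a final step, and your plan --- the scale $B_0=C^4_{\infty}(\R^d)\subset B_1=C^2_{\infty}(\R^d)\subset B_2=C_{\infty}(\R^d)$, frozen-coefficient (Fourier/parametrix) construction of each semigroup $e^{sL_t}$, and the $T$-product to assemble the time-inhomogeneous propagator --- is exactly that route. Deferring hypothesis (i) of Theorem \ref{thbasicTproduct} to the monograph is no worse than what the paper itself does.

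However, your treatment of hypothesis (iii), which you correctly single out as the main obstacle, contains a genuine gap. The commutator $[\partial^\beta,L_t]$ is an operator of order $|\beta|+1$: for instance the diffusion part contributes terms of the form $\tr\bigl(\partial_i(\sigma_t\sigma_t^T)\,\nabla^2\partial^{\beta-e_i}u\bigr)$, which involve derivatives of $u$ of order $|\beta|+1$. Such terms are \emph{not} bounded by $\|u\|_{C^{|\beta|}}$, so the Gronwall inequality for $\|e^{sL_t}f\|_{C^k}$ does not close, and no amount of extra smoothness of the coefficients ($C^5$ versus $C^4$) repairs this: the smoothness affects only the coefficients of the commutator, not its order. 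The bounds of hypothesis (iii) are actually obtained (in \cite{Ko11Markbook} and \cite{Ko10nonlbook}) by a structural argument: one writes the closed coupled system satisfied by $u$ together with its derivatives up to order $k$, observes that this system is itself the generator of an auxiliary positive (Feller-type) evolution on a product space --- analytically, a conditionally positive system; probabilistically, the derivative flow of the underlying process --- and deduces the growth bound $e^{Ks}$ from that, or alternatively reads the estimates off the parametrix construction. The same objection applies to your appeal to pseudo-differential calculus for the stable-like part with variable order $\al_{p,t}(x,s)$: uniform invariance of $C^2_{\infty}$ and $C^4_{\infty}$ with bounds $e^{Ks}$ there is a substantial piece of analysis exploiting the compact range of $\al$, not a consequence of classical symbolic calculus. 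These missing estimates are precisely the ``several ingredients'' the paper alludes to; as written, your proposal proves the easy reduction but not the hard core of the proposition.
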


\begin{proof} For a detailed proof (that uses several ingredients
 including Theorem \ref{thbasicTproduct} as a final step) we refer to the book \cite{Ko11Markbook}.
\end{proof}

 \section{Nonlinear propagators}
\label{secnonlinprop}

The following result from \cite{Ko10nonlbook} represents the basic tool
allowing one to build nonlinear propagators from infinitesimal linear
ones.

Recall that $V^{s,t}$ denotes the dual of $U^{t,s}$ given by Theorem \ref{thdualpropag}.
Let $M$ be a bounded subset of $B^{\star}$ that is closed in the
norm topologies of both $B^{\star}$ and $D^{\star}$.
For a $\mu
\in M$ let $C_{\mu}([0,r],M)$ be the metric space of the continuous
in the norm $D^{\star}$ curves $\xi_s\in M$, $s\in [0,r]$,
$\xi_0=\mu$, with the distance
\[
 \rho (\xi_.,\eta_.)=\sup_{s\in [0,r]}\|\xi_s-\eta_s\|_{D^{\star}}.
 \]

\begin{theorem}
\label{thbasicnonlinearwellpose}
 (i) Let $D$ be a dense subspace of a
Banach space $B$ that is itself a Banach space such that $\|f\|_D\ge
\|f\|_B$, and let $\xi \mapsto A[\xi]$ be a
mapping from $B^{\star}$ to bounded linear operators $A[\xi]: D\to B$
such that
\begin{equation}
\label{eqAisLiponxi}
 \|A[\xi]-A[\eta]\|_{D\to B} \le
 c\|\xi-\eta\|_{D^{\star}}, \quad \xi, \eta \in
 B^{\star}.
\end{equation}

(ii)
For any $\mu \in M$ and $\xi_.\in C_{\mu}([0,r],M)$, let
  the operator curve $A[\xi_t]: D\to B$ generate a strongly
continuous backward propagator of uniformly bounded linear operators
 $U^{t,s}[\xi_.]$ in $B$, $0\le t\le s\le r$, on the common invariant domain $D$
(in particular, \eqref{eqdefnonhomgenerator} holds), such that
\begin{equation}
\label{eqpropagatorboundedonasubspace}
 \|U^{t,s}[\xi_.]\|_{D\to D} \le c, \quad t \le s\le r,
\end{equation}
for some constant $c>0$ and with their dual propagators $V^{s,t}[\xi_.]$ preserving the set $M$.

 Then the weak nonlinear Cauchy problem
\begin{equation}
\label{eqgennonlinearprob}
 \frac{d}{dt}(f,\mu_t)=(A[\mu_t]f,\mu_t), \quad \mu_0=\mu, \quad
 f\in D,
\end{equation}
is well posed in $M$. More precisely, for any $\mu \in M$ it has a
unique solution $T_t(\mu) \in M$, and the transformations $T_t$ of
$M$ form a semigroup for $t\in [0,r]$ depending Lipschitz
continuously on time $t$ and the initial data in the norm of
$D^{\star}$, i.e.
\begin{equation}
\label{eqLipcontnonlinearsemigroup}
 \|T_t(\mu)-T_t(\eta)\|_{D^{\star}} \le c(r,M) \|\mu-\eta \|_{D^{\star}},
 \quad
\|T_t(\mu)-\mu\|_{D^{\star}} \le c(r,M)t
\end{equation}
with a constant $c(r,M)$.
\end{theorem}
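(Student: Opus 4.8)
The plan is to recast \eqref{eqgennonlinearprob} as a fixed-point problem on the complete metric space $C_{\mu}([0,r],M)$ and to invoke the contraction mapping principle, with the quantitative input supplied by the stability estimate \eqref{eq4thconvergepropagators} of Theorem \ref{thconvergepropagators}. Given a curve $\xi_.\in C_{\mu}([0,r],M)$, hypothesis (ii) furnishes a backward propagator $U^{t,s}[\xi_.]$ generated by $A[\xi_t]$, and Theorem \ref{thdualpropag} produces its dual $V^{s,t}[\xi_.]$, which preserves $M$. I would define the map $\Phi$ by $(\Phi\xi)_t=V^{t,0}[\xi_.]\mu$; a curve $\mu_.$ solves \eqref{eqgennonlinearprob} precisely when it is a fixed point of $\Phi$, since by Theorem \ref{thdualpropag}(ii) the dual propagator solves \eqref{eqdefnonhomgeneratordual1} with $A_s=A[\mu_s]$.

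First I would verify that $\Phi$ maps $C_{\mu}([0,r],M)$ into itself. The properties $(\Phi\xi)_t\in M$ and $(\Phi\xi)_0=\mu$ are immediate from the preservation of $M$ and from $V^{0,0}=\mathrm{id}$. For the norm-continuity in $D^{\star}$ required by the metric on $C_{\mu}$, I would use the integrated form of \eqref{eqdefnonhomgeneratordual1}, namely $(f,(\Phi\xi)_t)=(f,\mu)+\int_0^t(A[\xi_s]f,(\Phi\xi)_s)\,ds$; since $M$ is bounded in $B^{\star}$ and, by \eqref{eqAisLiponxi}, the norms $\|A[\xi_s]\|_{D\to B}$ are uniformly bounded on $M$, this yields $\|(\Phi\xi)_t-(\Phi\xi)_{t'}\|_{D^{\star}}\le C|t-t'|$. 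The same computation already delivers the time-Lipschitz bound $\|T_t(\mu)-\mu\|_{D^{\star}}\le c(r,M)t$ of \eqref{eqLipcontnonlinearsemigroup} once a fixed point is produced.

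The contraction estimate is the heart of the matter. For $\xi_.,\eta_.\in C_{\mu}([0,r],M)$ the propagators $U^{t,s}[\xi_.]$ and $U^{t,s}[\eta_.]$ satisfy the hypotheses of Theorem \ref{thconvergepropagators} (with the uniform $D$-bound \eqref{eqpropagatorboundedonasubspace}), so applying \eqref{eq4thconvergepropagators} together with \eqref{eqAisLiponxi} gives
\[
\|(\Phi\xi)_t-(\Phi\eta)_t\|_{D^{\star}}\le c\,t\,\sup_{s\le t}\|A[\xi_s]-A[\eta_s]\|_{D\to B}\,\|\mu\|_{B^{\star}}\le c'\,t\,\sup_{s\le t}\|\xi_s-\eta_s\|_{D^{\star}},
\]
where the factor $t$ originates from the time integral in \eqref{eq3thconvergepropagators}, and the boundedness of $M$ in $B^{\star}$ is exactly what turns the $B^{\star}$-norm on the right of \eqref{eq4thconvergepropagators} into a finite constant. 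Hence on any interval $[0,t_0]$ with $c't_0<1$ the map $\Phi$ is a contraction on the complete space $C_{\mu}([0,t_0],M)$ (completeness uses that $M$ is closed in $D^{\star}$), yielding a unique local solution.

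Since $t_0$ depends only on the uniform constants and not on the base point, I would extend to all of $[0,r]$ by restarting from $\mu_{t_0}\in M$ and concatenating, which simultaneously forces global uniqueness. The semigroup property $T_{t+s}=T_t\circ T_s$ then follows from uniqueness and the autonomy of \eqref{eqgennonlinearprob}. For Lipschitz dependence on the initial data I would split
\[
\mu_t-\eta_t=(V^{t,0}[\mu_.]-V^{t,0}[\eta_.])\mu+V^{t,0}[\eta_.](\mu-\eta),
\]
bounding the first term by \eqref{eq4thconvergepropagators} and \eqref{eqAisLiponxi} as above and the second by $\|V^{t,0}[\eta_.]\|_{D^{\star}\to D^{\star}}\le c$, the dual of \eqref{eqpropagatorboundedonasubspace}; absorbing the first term on a short interval and iterating produces \eqref{eqLipcontnonlinearsemigroup}. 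The main obstacle is the mismatch of spaces: the contraction is measured in the weak norm of $D^{\star}$ while the operator-Lipschitz bound lives in $D\to B$, so the scheme closes only because $M$ is bounded in $B^{\star}$ and closed in $D^{\star}$, and because the merely weakly-$\star$ continuous dual propagators turn out to be genuinely Lipschitz in the $D^{\star}$-norm along candidate solutions — a regularity that must be \emph{extracted} from the integral equation rather than assumed.
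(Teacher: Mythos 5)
Your proposal is correct and follows essentially the same route as the paper's proof: a contraction argument for the map $\xi_.\mapsto V^{\cdot,0}[\xi_.]\mu$ on $C_{\mu}([0,t_0],M)$, with the key estimate coming from the telescoping identity $U^{0,t}[\xi^1_.]-U^{0,t}[\xi^2_.]=\int_0^t U^{0,s}[\xi^1_.](A[\xi^1_s]-A[\xi^2_s])U^{s,t}[\xi^2_.]\,ds$ combined with \eqref{eqAisLiponxi}, \eqref{eqpropagatorboundedonasubspace} and the boundedness of $M$ in $B^{\star}$, followed by iteration in time and the same splitting $\mu_t-\eta_t=(V^{t,0}[\mu_.]-V^{t,0}[\eta_.])\mu+V^{t,0}[\eta_.](\mu-\eta)$ for \eqref{eqLipcontnonlinearsemigroup}. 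The only cosmetic difference is that you quote estimate \eqref{eq4thconvergepropagators} of Theorem \ref{thconvergepropagators} (whose proof rests on the identical identity \eqref{eq3thconvergepropagators}) where the paper re-derives the bound inline, and you spell out the self-mapping and time-continuity checks that the paper leaves implicit.
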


 \begin{proof}  Since
\[
(f,(V^{t,0}[\xi^1_.]-V^{t,0}[\xi^2_.])\mu)=
(U^{0,t}[\xi^1_.]f-U^{0,t}[\xi^2_.]f,\mu)
\]
and
\[
U^{0,t}[\xi^1_.]-U^{0,t}[\xi^2_.]=
U^{0,s}[\xi^1_.]U^{s,t}[\xi^2_.]\mid_{s=0}^t \]
\[
=\int_0^tU^{0,s}[\xi^1_.](A[\xi^1_s]-A[\xi^2_s])U^{s,t}[\xi^2_.]\,
 ds,
\]
and taking into account \eqref{eqAisLiponxi} and
\eqref{eqpropagatorboundedonasubspace} one deduces that
\[
\|(V^{t,0}[\xi^1_.]-V^{t,0}[\xi^2_.])\mu \|_{D^{\star}}
 \le \|U^{0,t}[\xi^1_.]-U^{0,t}[\xi^2_.]\|_{D\to B}\|\mu\|_{B^{\star}}
 \]
 \[
 \le tc(r,M)\sup_{s\in [0,r]}
 \|\xi^1_s-\xi^2_s\|_{D^{\star}}
 \]
 (of course we used the assumed boundedness of $M$),
 implying that for $t\le t_0$ with a small enough $t_0$ the mapping $\xi_t \mapsto
 V^{t,0}[\xi_.]$ is a contraction in $C_{\mu}([0,t],M)$.
 Hence by the contraction principle there exists a unique fixed point
 for this mapping. To obtain the unique global solution one just has
 to iterate the construction on the next interval
 $[t_0,2 t_0]$, then on $[2t_0,3t_0]$, etc. The semigroup property of
 $T_t$ follows directly from uniqueness.

Finally, if $T_t(\mu)=\mu_t$ and $T_t(\eta)=\eta_t$, then
\[
\mu_t-\eta_t=V^{t,0}[\mu_.]\mu-V^{t,0}[\eta_.]\eta
=(V^{t,0}[\mu_.]-V^{t,0}[\eta_.])\mu + V^{t,0}[\eta_.](\mu-\eta).
\]
Estimating the first term as above yields
\[
\sup_{s\le t}\|\mu_s-\eta_s\|_{D^{\star}}
 \le c(r, M) (t \sup_{s\le t}\|\mu_s-\eta_s\|_{D^{\star}}
 +\|\mu-\eta\|_{D^{\star}}),
\]
which implies the first estimate in
\eqref{eqLipcontnonlinearsemigroup} first for small times, which is
then extended to all finite times by the iteration. The second
estimate in \eqref{eqLipcontnonlinearsemigroup} follows
 from \eqref{eqdefnonhomgeneratordual1}.
\end{proof}

\begin{remark}
\label{remarkbasicnonlinearwellpose}
 For our purposes, the basic
examples are given by  $B=C_{\infty}(\R^d)$, $M=\PC(\R^d)$, and
$D=C^2_{\infty}(\R^d)$ or $D=C^1_{\infty}(\R^d)$.
In order to see that $\PC(\R^d)$ is closed in the norm topology of $D^{\star}$ for $D=C^k_{\infty}(\R^d)$
with any natural $k$, observe that
 the distance $d$ on $\PC(\R^d)$ induced by
its embedding in $(C^k_{\infty}(\R^d))'$ is defined by
\[
d(\mu,\eta)=\sup \{|(f,\mu-\eta)|: f\in C^2_{\infty}(\R^d),
\|f\|_{C^2_{\infty}(\R^d)} \le 1\}.
\]
and hence
\[
d(\mu,\eta)=\sup \{|(f,\mu-\eta)|: f\in C^2(\R^d), \|f\|_{C^2(\R^d)}
\le 1\}.
\]
Consequently, convergence $\mu_n \to \mu$, $\mu_n \in \PC(\R^d)$,  with respect to
this metric implies the convergence $(f,\mu_n)\to (f,\mu)$ for all
$f\in C^k(\R^d)$ and hence for all $f\in C_{\infty}(\R^d)$ and for
$f$ being constants. This implies tightness of the family $\mu_n$
and that the limit $\mu \in \PC(\R^d)$.
\end{remark}

Theorem \ref{thbasicTproduct} supplies a useful criterion for condition (ii) of the previous theorem,
thus yielding the following corollary.

\begin{theorem}
\label{thbasicnonlinearwellpose1}

Under the assumption (i) of Theorem \ref{thbasicnonlinearwellpose} assume instead of (ii) the following:

(ii') There exists another
 Banach space $\tilde D$, which is a dense subspace of $D$, so that
 all $A[\mu]$, $\mu \in M$, are uniformly bounded operators $\tilde D\to D$ and $D\to B$.

 (iii') For any $\mu \in M$
  the operator $A[\mu]: D\to B$ generates a strongly
 continuous semigroup $e^{tA[\mu]}$ in $B$
  with invariant core $D$, such that $\tilde D$ is also invariant under all $e^{sA[\mu]}$, and these operators are uniformly bounded as operators in $\tilde D,D$, $B$, with the norms not exceeding $e^{Ks}$ with a constant $K$,

  (iv') the set $M$ is invariant under all dual semigroups $(e^{tA[\mu]})'$.

Then condition (ii) and hence the conclusion of Theorem \ref{thbasicnonlinearwellpose} hold.
Moreover, the operators $U^{t,s}[\mu_.]$ form a strongly continuous propagator of
bounded operators in $D$.
\end{theorem}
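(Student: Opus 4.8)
The plan is to verify hypothesis (ii) of Theorem~\ref{thbasicnonlinearwellpose} for an arbitrary fixed curve $\xi_.\in C_{\mu}([0,r],M)$ by feeding the frozen, time-dependent family $A[\xi_t]$ into the $T$-product machinery of Theorem~\ref{thbasicTproduct}, applied to the triple of Banach spaces $B_0=\tilde D$, $B_1=D$, $B_2=B$. Since the propagator demanded in (ii) is a \emph{backward} one, while the $T$-product of Theorem~\ref{thbasicTproduct} produces a forward evolution $W(\sigma,\rho)$ solving $\frac{d}{d\sigma}W=L_\sigma W$, I would run the construction on the reversed generators $L_\tau:=A[\xi_{r-\tau}]$ and then set $U^{t,s}[\xi_.]:=W(r-t,r-s)$ for $t\le s$. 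The chain rule $U^{t,\tau}U^{\tau,s}=U^{t,s}$ and the differential relations \eqref{eqdefnonhomgenerator} (with their characteristic minus sign) then follow from the forward relations for $W$ by the substitution $\sigma=r-t$. The bulk of the work is bookkeeping: hypothesis (ii') supplies the uniform boundedness $\tilde D\to D$ and $D\to B$ needed as hypothesis (ii) of Theorem~\ref{thbasicTproduct}, while hypothesis (iii') supplies its hypotheses (i) and (iii) verbatim, namely generation of a strongly continuous semigroup in $B$ with invariant core $D$, invariance of $\tilde D$, and the uniform $e^{Ks}$ bounds on $\tilde D$, $D$, $B$.

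The only hypothesis of Theorem~\ref{thbasicTproduct} that is not a mere relabelling is its continuity condition (iv), and this is precisely where assumption (i) of Theorem~\ref{thbasicnonlinearwellpose} is used. From the Lipschitz estimate \eqref{eqAisLiponxi} one gets
\[
 \|L_\tau f-L_{\tau'}f\|_B\le c\,\|\xi_{r-\tau}-\xi_{r-\tau'}\|_{D^{\star}}\,\|f\|_D,
\]
and since $\xi_.$ is, by the definition of $C_{\mu}([0,r],M)$, continuous in the norm of $D^{\star}$, the map $\tau\mapsto L_\tau f$ is continuous into $B$ uniformly for $f$ in bounded subsets of $B_1=D$, which is exactly condition (iv). With all four hypotheses in hand, conclusions (iii) and (iv) of Theorem~\ref{thbasicTproduct} give that $U^{t,s}[\xi_.]$ is a strongly continuous backward propagator on the invariant domain $D$ obeying \eqref{eqdefnonhomgenerator} and uniformly bounded in $D$; the latter is estimate \eqref{eqpropagatorboundedonasubspace}, with a constant $c$ independent of $\xi_.$ because the bounds in (iii') are. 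Strong continuity plus uniform boundedness in $D$ is simultaneously the \emph{Moreover} assertion of the present theorem.

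What remains is the last clause of (ii), that the dual propagator $V^{s,t}[\xi_.]$ preserves $M$, and I expect this to be the main obstacle. The natural route is through the partition approximants $U_\Delta$ defining the $T$-product: each $U_\Delta$ is a composition of frozen semigroups $e^{(\tau-s)A[\xi_{t_j}]}$, so its dual is the reverse composition of the dual semigroups $(e^{(\tau-s)A[\xi_{t_j}]})'$, every one of which carries $M$ into $M$ by hypothesis (iv'); hence each discrete dual $(U_\Delta)'$ preserves $M$, and one then passes to the limit $\delta(\Delta)\to0$. The difficulty is purely topological: the $T$-product converges in the norm of $B$, so dually only \emph{weak-$\star$} convergence of $(U_\Delta)'\xi$ to $V^{s,t}[\xi_.]\xi$ is automatic, whereas $M$ is assumed closed only in the norm topology of $D^{\star}$, and bounded norm-closed sets need not be weak-$\star$ closed. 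The way I would close this gap is to invoke the extra structure of $M$ recorded in Remark~\ref{remarkbasicnonlinearwellpose}: for the model sets such as $M=\PC(\R^d)$, the mass is preserved along each dual semigroup, and the tightness argument of that remark upgrades the weak-$\star$ limit of full-mass elements of $M$ back into $M$, so that $V^{s,t}[\xi_.]\xi\in M$. Once $M$ is shown invariant, hypothesis (ii) of Theorem~\ref{thbasicnonlinearwellpose} is fully verified and its conclusion is inherited.
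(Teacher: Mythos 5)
Your core verification coincides with the paper's own proof: the paper likewise applies Theorem \ref{thbasicTproduct} to the triple $B_0=\tilde D$, $B_1=D$, $B_2=B$, reads conditions (i)--(iii) of that theorem off hypotheses (ii')--(iii'), and checks its condition (iv) exactly as you do, from the Lipschitz estimate \eqref{eqAisLiponxi} together with the $D^{\star}$-continuity of the curve $\xi_.$; the forward/backward bookkeeping by time reversal, which the paper leaves implicit, is handled correctly in your write-up, and the ``Moreover'' clause comes from conclusion (iii) of Theorem \ref{thbasicTproduct} in both treatments.

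The one place where your proposal goes astray is the invariance of $M$ under the dual propagators, which, to your credit, you correctly isolate as the clause needing a separate argument (the paper silently subsumes it in ``implying condition (ii)''). But your diagnosis of the difficulty is mistaken, and your proposed cure does not prove the theorem as stated. Conclusion (i) of Theorem \ref{thbasicTproduct} asserts that the convergence $U_{\Delta}(s,r)f \to U(s,r)f$ in $B$ is \emph{uniform for $f$ in bounded subsets of} $B_1=D$. Dualizing, for $\xi \in M$,
\[
\bigl\| (U_{\Delta}(s,r))'\xi - V^{s,t}[\xi_.]\xi \bigr\|_{D^{\star}}
= \sup_{\|f\|_D\le 1} \bigl| \bigl( (U_{\Delta}(s,r)-U(s,r))f, \xi \bigr) \bigr|
\le \|\xi\|_{B^{\star}} \sup_{\|f\|_D\le 1} \bigl\| (U_{\Delta}(s,r)-U(s,r))f \bigr\|_B \to 0,
\]
so the duals of the approximants (each of which maps $M$ into $M$ by (iv'), being reverse-ordered compositions of the dual semigroups $(e^{uA[\xi_{t_j}]})'$ with $\xi_{t_j}\in M$) converge in the \emph{norm} of $D^{\star}$, not merely weak-$\star$; since $M$ is assumed closed in exactly that norm, the limit lies in $M$. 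By contrast, your fallback through Remark \ref{remarkbasicnonlinearwellpose} uses tightness of probability measures and hence covers only $M=\PC(\R^d)$ and its relatives, whereas the theorem concerns an arbitrary bounded $M$ closed in the norms of $B^{\star}$ and $D^{\star}$; as written, your last step therefore proves a strictly weaker statement. Replace it with the displayed estimate and the proof is complete and fully general.
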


\begin{proof}
For $\xi_.\in C_{\mu}([0,r],M)$,
the operator curve $L_s=A[\xi_s]: D\to B$ clearly satisfies conditions (i)-(iii)
of Theorem \ref{thbasicTproduct}. To check its last condition (iv) we have to show
that $A[\xi_t]f$ as a function $t\mapsto B$ is continuous uniformly for $f$ from a bounded domain of $D$.
And this follows from \eqref{eqAisLiponxi}, as it implies
\[
 \|(A[\xi_t]-A[\xi_s])f \|_B \le
 c\|\xi_t-\xi_s\|_{D^{\star}} \|f\|_D.
\]
Hence Theorem \ref{thbasicTproduct} is applicable to the curve $L_s=A[\xi_s]: D\to B$,
implying condition (ii) of Theorem \ref{thbasicnonlinearwellpose}.
\end{proof}

As a preliminary step in studying sensitivity, let us prove a simple
stability result for the above nonlinear
semigroups $T_t$ with respect to the small perturbations of the
generator.

\begin{theorem}
\label{thbasicnonlinearwellposestabil} Under the assumptions of
Theorem \ref{thbasicnonlinearwellpose} suppose $\xi\mapsto \tilde
A[\xi]$ is another mapping from $B^{\star}$ to bounded operators
$D\to B$ satisfying the same condition as $A$ with the corresponding
$\tilde U^{t,s}$, $\tilde V^{s,t}$ satisfying the same
conditions as $U^{t,s}$, $V^{s,t}$. Suppose
\begin{equation}
\label{eqthbasicnonlinearwellposestabil1}
 \|\tilde A[\xi]-A[\xi]\|_{D\to B} \le
 \kappa, \quad \xi \in M
\end{equation}
with a constant $\kappa$. Then
\begin{equation}
\label{eqthbasicnonlinearwellposestabil2}
 \|\tilde T_t(\eta)-T_t(\mu)\|_{D^{\star}} \le
 c(r, M)
(\kappa +\|\mu-\eta\|_{D^{\star}}).
\end{equation}
\end{theorem}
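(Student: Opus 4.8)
The plan is to follow closely the argument used to prove Theorem \ref{thbasicnonlinearwellpose}, but now tracking simultaneously three sources of discrepancy: the perturbation of the generator $A\mapsto\tilde A$, the change of the reference curve, and the change of the initial datum $\mu\mapsto\eta$. Write $\mu_t=T_t(\mu)$ and $\tilde\eta_t=\tilde T_t(\eta)$, so that the fixed-point characterization of the solutions gives $\mu_t=V^{t,0}[\mu_.]\mu$ and $\tilde\eta_t=\tilde V^{t,0}[\tilde\eta_.]\eta$. The first step is the three-term decomposition
\[
\tilde\eta_t-\mu_t=(\tilde V^{t,0}[\tilde\eta_.]-V^{t,0}[\tilde\eta_.])\eta+(V^{t,0}[\tilde\eta_.]-V^{t,0}[\mu_.])\eta+V^{t,0}[\mu_.](\eta-\mu),
\]
which isolates the effect of the generator perturbation (first term, along the frozen curve $\tilde\eta_.$), the effect of changing the reference curve for the \emph{unperturbed} family (second term), and the effect of changing the initial datum (third term).

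The second step is to estimate each term in the norm of $D^{\star}$. For the first term I would dualize, writing $(f,(\tilde V^{t,0}[\tilde\eta_.]-V^{t,0}[\tilde\eta_.])\eta)=((\tilde U^{0,t}[\tilde\eta_.]-U^{0,t}[\tilde\eta_.])f,\eta)$, and invoke the Duhamel identity
\[
\tilde U^{0,t}[\tilde\eta_.]-U^{0,t}[\tilde\eta_.]=\int_0^t U^{0,s}[\tilde\eta_.](\tilde A[\tilde\eta_s]-A[\tilde\eta_s])\tilde U^{s,t}[\tilde\eta_.]\,ds,
\]
obtained exactly as in the proof of Theorem \ref{thbasicnonlinearwellpose} from the generator equations \eqref{eqdefnonhomgenerator}. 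Combining \eqref{eqthbasicnonlinearwellposestabil1}, the uniform $D\to D$ bound \eqref{eqpropagatorboundedonasubspace} on $\tilde U^{s,t}$, and the uniform boundedness of $U^{0,s}$ in $B$ yields $\|\tilde U^{0,t}[\tilde\eta_.]-U^{0,t}[\tilde\eta_.]\|_{D\to B}\le tc(r,M)\kappa$, so the first term is bounded by $c(r,M)\kappa$, the boundedness of $M$ controlling $\|\eta\|_{B^{\star}}$. The second term is precisely the quantity estimated inside the proof of Theorem \ref{thbasicnonlinearwellpose}, giving $\|(V^{t,0}[\tilde\eta_.]-V^{t,0}[\mu_.])\eta\|_{D^{\star}}\le tc(r,M)\sup_{s\le t}\|\tilde\eta_s-\mu_s\|_{D^{\star}}$. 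The third term is bounded by $c\|\eta-\mu\|_{D^{\star}}$, since $V^{t,0}[\mu_.]$ is the dual of the $D\to D$ bounded operator $U^{0,t}[\mu_.]$ and hence bounded on $D^{\star}$.

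The third step is to collect these bounds, take the supremum over $s\le t$, and absorb the self-referential second term, obtaining
\[
\sup_{s\le t}\|\tilde\eta_s-\mu_s\|_{D^{\star}}\le c(r,M)(\kappa+\|\eta-\mu\|_{D^{\star}})+tc(r,M)\sup_{s\le t}\|\tilde\eta_s-\mu_s\|_{D^{\star}},
\]
so that for $t\le t_0$ with $t_0$ small enough the last summand moves to the left-hand side and yields \eqref{eqthbasicnonlinearwellposestabil2} on $[0,t_0]$. The semigroup property of $T_t$ and $\tilde T_t$ then permits iterating this estimate over the consecutive intervals $[t_0,2t_0],[2t_0,3t_0],\dots$ covering $[0,r]$ in finitely many steps, at the cost of enlarging the constant to a new $c(r,M)$, which establishes the claim on all of $[0,r]$.

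I expect the only genuinely delicate point to be the bookkeeping in the first term, where one must handle two distinct propagators along a common frozen curve and check that the Duhamel integrand remains in $B$ while being driven by an operator difference that is controlled only as a map $D\to B$; this is exactly the place where the hypothesis $\|f\|_D\ge\|f\|_B$ together with the uniform $D\to D$ bounds on the propagators is essential. Once this term is tamed, the remaining two terms are routine, and the small-time-plus-iteration scheme is identical to the one already employed in Theorem \ref{thbasicnonlinearwellpose}.
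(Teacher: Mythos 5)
Your proposal is correct and follows essentially the same route as the paper: the paper's own proof uses the two-term decomposition $\mu_t-\tilde\eta_t=(V^{t,0}[\mu_.]-\tilde V^{t,0}[\tilde\eta_.])\mu+\tilde V^{t,0}[\tilde\eta_.](\mu-\eta)$, estimating the first term by a single Duhamel integral in which the generator difference is bounded by $c\|\mu_s-\tilde\eta_s\|_{D^{\star}}+\kappa$, followed by the same small-time absorption and iteration. Your three-term split merely applies the triangle inequality at the level of propagators rather than inside the Duhamel integrand, which is an immaterial difference.
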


\begin{proof} As in the proof of Theorem \ref{thbasicnonlinearwellpose},
denoting $T_t(\mu)=\mu_t$ and $\tilde T_t(\eta)=\tilde \eta_t$ one
can write
\[
\mu_t-\tilde \eta_t=(V^{t,0}[\mu_.])- \tilde V^{t,0}[\tilde
\eta_.])\mu +\tilde V^{t,0}[\tilde \eta](\mu-\eta)
\]
and then
\[
\sup_{s\le t}\|\mu_s-\tilde \eta_s\|_{D^{\star}} \le c(r,M)\left( t
(\sup_{s\le
t}\|\mu_s-\tilde\eta_s\|_{D^{\star}}+\kappa)+\|\mu-\eta\|_{D^{\star}}\right),
\]
which implies \eqref{eqthbasicnonlinearwellposestabil2} first for
small times, and then for all finite times by iterations.
\end{proof}

\section{Linearized evolution around a path of a nonlinear semigroup}

Both for numerical simulations and for the application to interacting particles, it is crucial
to analyze the dependence of the solutions to nonlinear kinetic equations on some parameters
 and on the initial data. Ideally we would like to have smooth dependence.

More precisely, suppose we are given a family of operators $A^{\al}[\mu]$, depending on a real parameter $\al$ and
 satisfying the assumptions of Theorem
 \ref{thbasicnonlinearwellpose} for each $\al$. For
 $\mu_t^{\al}=\mu_t^{\al}(\mu_0^{\al})$, a solution to corresponding \eqref{eqkineqmeanfield1} with the initial condition $\mu_0^{\al}$,
  we are interested in the derivative
\begin{equation}
\label{eqdefderwithpar}
 \xi_t(\al)=\frac{\pa \mu_t^{\al}}{\pa \al}.
\end{equation}

In this section we shall start with the analysis of the
linearized evolution around a path of a nonlinear semigroup.
Namely, differentiating \eqref{eqkineqmeanfield1} (at least formally for the moment)
with respect to $\al$ yields the equation
\begin{equation}
\label{eqforderpar}
 \frac{d}{dt}(g,\xi_t(\al))
 =(A^{\al}[\mu_t^{\al}]g, \xi_t (\al))+(D_{\xi_t(\al)}A^{\al}[\mu_t^{\al}]g,\mu_t^{\al})
  + \left(\frac{\pa A^{\al}[\mu_t^{\al}]}{\pa \al}g,\mu_t^{\al}\right),
\end{equation}
with the initial condition
\begin{equation}
\label{eqdefderwithparinit}
\xi_0=\xi_0(\al)=\frac{\pa \mu_0^{\al}}{\pa \al},
\end{equation}
where
\begin{equation}
\label{eqdeffirstGatderwithinitial}
 D_{\eta}A^{\al}[\mu]
 = \lim_{s \rightarrow 0_+} \frac{1}{s}(A^{\al}[\mu+s\eta] - A^{\al}[\mu])
\end{equation}
denotes the Gateaux derivatives of $A[\mu]$ as a mapping $D^{\star} \to \LC(D,B)$,
assuming that the definition of $A^{\al}[\mu]$ can be extended to a neighborhood of $M$ in $D^{\star}$.

This section is devoted to the preliminary analysis of the solutions to equation
\eqref{eqforderpar}. In the next section we shall explore their connections with the derivatives
 from the r.h.s. of \eqref{eqdefderwithpar}.

Let $\tilde D\subset D \subset B$ be, as above, three Banach spaces such that
  $\| \, \|_{\tilde D} \ge \| \, \|_D  \ge \| \, \|_B$,
 $D$ is dense in $B$ in the topology of $B$ and $\tilde D$ is dense in $D$ in the topology of $B$; and
let $M$ and $C_{\mu}([0,r],M)$ be defined as in Section \ref{secnonlinprop}.

\begin{theorem}
\label{thbasicnonlinearwellposevar}

 (i) Let, for each $\al$, $\xi \mapsto A^{\al}[\xi]$ be a
mapping from $B^{\star}$ to linear operators $A^{\al}[\xi]$ that
are uniformly bounded as operators $D\to B$
and $\tilde D\to D$ and
such that
\begin{equation}
\label{eqAisLiponxifam}
 \|A[\xi]-A[\eta]\|_{D\to B} \le
 c\|\xi-\eta\|_{D^{\star}}, \quad \xi, \eta \in
 B^{\star}
\end{equation}
for a constant $c>0$.

(ii)
For any $\al$, $\mu \in M$ and $\xi_.\in C_{\mu}([0,r],M)$,
 let the operator curve $A^{\al}[\xi_t]$ generate
  a strongly continuous backward propagator of uniformly bounded linear operators
 $U^{t,s;\al}[\xi_.]$, $0\le t\le s\le r$, in $B$ on the common invariant domain $D$,
 and with the dual propagator $V^{s,t;\al}[\xi_.]$ preserving the set $M$.

(iii) Let the propagators $\{U^{t,s;\al}[\xi_.]\}$, $t\le s$, are strongly
 continuous and bounded propagators in both $B$ and $D$.

(iv) Let the derivatives $\pa A^{\al}[\mu_t^{\al}]/ \pa \al$ exist in the norm topologies
of $\LC (D,B)$ and $\LC(\tilde D,D)$, and represent also bounded operators in
$\LC (D,B)$ and $\LC(\tilde D,D)$.

(v) Let $A^{\al}[\mu]$ can be extended to a
mapping $D^{\star} \to \LC(D,B)$ such that the limit in \eqref{eqdeffirstGatderwithinitial} exists
in the norm topology of $\LC(D,B)$ for any $\mu \in B^{\star}, \xi \in D^{\star}$. Moreover,
the  Gateaux derivatives $\xi \mapsto D_{\xi}A^{\al}[\mu]$ is continuous in $\mu$ (taken in the norm
topology of $B^{\star}$) and defines a bounded linear operator $D^{\star} \to \LC(D,B)$, that is
\begin{equation}
\label{eq1thbasicnonlinearwellposevar}
\|D_{\xi} A^{\al}[\mu]\|_{D\to B} \le c \|\mu\|_{B^{\star}}\|\xi \|_{D^{\star}}
\end{equation}
with a constant $c$.

 (vi) Finally, suppose there exists a representation
 \begin{equation}
\label{eqdualrepforfirstder}
(D_{\xi} A^{\al} [\mu]g,\mu)=(F^{\al}[\mu]g,\xi)
\end{equation}
with $F^{\al}[\mu]$ being a continuous mapping $D^{\star} \to \LC(D,D)$.

Then, for each $\al, \mu \in M$, there exists a weakly-$\star$ continuous in $D^{\star}$ family of propagator $\Pi^{s,t}[\al,\mu]$
 (constructed below) solving equation \eqref{eqforderpar} in $\tilde D^{\star}$, that is, for any
 $\xi_0 \in D^{\star}$, $\xi_t^{\al}=\Pi^{s,t}[\al,\mu]\xi_0$ satisfies \eqref{eqforderpar} for any $f\in \tilde D$.
\end{theorem}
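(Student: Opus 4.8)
The plan is to freeze the reference trajectory and recognise \eqref{eqforderpar} as a linear, inhomogeneous dual equation that fits the perturbation framework of Section 3. Under hypotheses (i)--(ii) the nonlinear problem is well posed by Theorem \ref{thbasicnonlinearwellpose}, so the path $\mu^{\al}_t=T^{\al}_t(\mu)$ exists and is Lipschitz continuous in the $D^{\star}$-norm by \eqref{eqLipcontnonlinearsemigroup}. Along this fixed path set $A_t:=A^{\al}[\mu^{\al}_t]$ and, invoking the duality representation \eqref{eqdualrepforfirstder}, rewrite the middle term of \eqref{eqforderpar} as $(F_t g,\xi_t)$ with $F_t:=F^{\al}[\mu^{\al}_t]$, a \emph{bounded} operator in $D$ by hypothesis (vi). The remaining (third) term does not involve $\xi_t$: it defines a source curve $\beta_t\in D^{\star}$ through $(g,\beta_t)=(\pa_{\al}A^{\al}[\mu^{\al}_t]\,g,\mu^{\al}_t)$, which is meaningful because $\pa_{\al}A^{\al}[\mu^{\al}_t]$ is a bounded operator $D\to B$ by hypothesis (iv) and $\mu^{\al}_t\in M\subset B^{\star}$. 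Thus \eqref{eqforderpar} takes exactly the form \eqref{eqdefnonhomgeneratordual3} augmented by the source $(f,\beta_t)$, and the task splits into (a) constructing the dual propagator generated by $A_t+F_t$ and (b) absorbing $\beta_t$ by Duhamel's formula.

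For step (a) I would apply Theorem \ref{thperturbweak} (hence Theorem \ref{thdualpropagext}) to the pair $(\tilde D,D)$. The base backward propagator $U^{t,s;\al}[\mu^{\al}_.]$ generated by $A_t$ is strongly continuous and uniformly bounded in both $B$ and $D$ by hypothesis (iii), which, together with the uniform $\tilde D\to D$ and $D\to B$ bounds from hypothesis (i) and the density of $B^{\star}$ in $D^{\star}$, supplies the assumptions of Theorem \ref{thdualpropagext}. The perturbation $F_t=F^{\al}[\mu^{\al}_t]$ is a bounded, strongly continuous family in $D$: continuity in $t$ follows by composing the continuity of $\xi\mapsto F^{\al}[\xi]$ into $\LC(D,D)$ from (vi) with the $D^{\star}$-continuity of $\mu^{\al}_t$. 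Theorem \ref{thperturbweak} then produces the dual perturbed propagator $\Psi^{s,t}$ (the adjoint of the series \eqref{eqperturbseriesforprop} built over $(D,\tilde D)$), weakly-$\star$ continuous in $D^{\star}$ and solving the homogeneous part of \eqref{eqforderpar} (terms one and two) in $\tilde D^{\star}$. I set $\Pi^{s,t}[\al,\mu]:=\Psi^{s,t}$.

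For step (b) I would verify that $\beta_t$ is a weakly-$\star$ continuous $D^{\star}$-valued curve — using (iv) and the $D^{\star}$-continuity of $\mu^{\al}_t$ — and then define the full solution by the Duhamel superposition $\xi_s=\Pi^{s,0}\xi_0+\int_0^s\Pi^{s,\tau}\beta_{\tau}\,d\tau$, the integral taken in the weak-$\star$ sense in $D^{\star}$, which is well defined thanks to the uniform weak-$\star$ continuity of the dual propagators. Differentiating this expression against a test function $f\in\tilde D$, exactly as in the proofs of Theorems \ref{thdualpropag} and \ref{thperturbweak}, reproduces the boundary contribution $(f,\beta_s)=(\pa_{\al}A^{\al}[\mu^{\al}_s]f,\mu^{\al}_s)$ alongside the homogeneous terms, recovering \eqref{eqforderpar} in $\tilde D^{\star}$; together with the weak-$\star$ continuity of $\Psi^{s,t}$ this also yields the asserted weak-$\star$ continuity of the solution.

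The main obstacle will be matching the regularity of the frozen-path data to the perturbation machinery. Theorem \ref{thdualpropagext}, on which Theorem \ref{thperturbweak} rests, requires $t\mapsto A_t=A^{\al}[\mu^{\al}_t]$ to be continuous as a map into $\LC(\tilde D,D)$, so that the pairings $(A_s f,\Psi^{s,t}\xi)$ are continuous in $s$ and the series may be differentiated term by term; yet the Lipschitz estimate \eqref{eqAisLiponxifam} controls only the weaker $\LC(D,B)$-norm, while the trajectory $\mu^{\al}_t$ is itself only Lipschitz in $D^{\star}$. Closing this gap — upgrading continuity from the pair $(D,B)$ to $(\tilde D,D)$ — is the technical heart: I would do so by exploiting the uniform $\tilde D\to D$ boundedness from (i) together with a density argument against $\tilde D$-test vectors, and, in parallel, establish the $\tilde D^{\star}$-tested continuity of $\beta_t$ that legitimises differentiating the Duhamel integral. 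This unboundedness-driven mismatch between the spaces in which the evolution and its generator live is precisely the difficulty flagged in the introduction.
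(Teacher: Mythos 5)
Your proposal reproduces the paper's own proof essentially step for step: well-posedness of the frozen path $\mu_t^{\al}$ via Theorem \ref{thbasicnonlinearwellpose}, conversion of the Gateaux-derivative term into a $D$-bounded, strongly continuous perturbation $F_s$ via hypothesis (vi) and estimate \eqref{eq1thbasicnonlinearwellposevar}, construction of $\Phi^{t,r}$ and its dual $\Psi^{r,t}$ by Theorem \ref{thperturbweak}, and a Duhamel formula absorbing the $\pa A^{\al}/\pa\al$ source --- the only cosmetic difference being that the paper bundles the affine Duhamel expression \eqref{eqfinalpropforparamder} into the definition of $\Pi^{s,t}$ itself rather than keeping the homogeneous propagator and the source separate. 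The $\LC(\tilde D,D)$-continuity of $s\mapsto A^{\al}[\mu_s^{\al}]$ that you flag as the ``technical heart'' (needed for assumption (ii) of Theorem \ref{thdualpropagext}) is a genuine point, but the paper passes over it silently as well; it is precisely the condition \eqref{eqAisLiponxifam1} that the paper only introduces later in Theorem \ref{thbasicnonlinearwellposevarcor}, so flagging and supplying it is a refinement of, not a departure from, the paper's route.
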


\begin{remark}
Condition (vi) causes no trouble. In fact it follows from duality and additional weak continuity
assumption on $D_{\xi}$. We shall not formulate this assumption by two reasons.
(i) In case of reflexive $B$ it is satisfied automatically. (ii) Though in case we are most interested in, that is for $B^{\star}$ being the space of Borel measures, $B$ is not reflexive,
 in applications to Markov semigroup representation \eqref{eqdualrepforfirstder} again arises automatically,
due to the special structure of $A[\mu]$ (of the L\'evy-Khintchin type).
\end{remark}

\begin{remark}
Construction of propagators from condition (ii) can naturally be carried out via Theorem \ref{thbasicnonlinearwellpose1}, that is via $T$-products.
\end{remark}

\begin{proof}
Theorem \ref{thbasicnonlinearwellpose} implies that, for any $\al$,
 the weak nonlinear Cauchy problem
\begin{equation}
\label{eqgennonlinearprobparam}
 \frac{d}{dt}(f,\mu_t^{\al})=(A^{\al}[\mu_t^{\al}]f,\mu_t^{\al}), \quad \mu_0=\mu, \quad
 f\in D,
\end{equation}
is well posed in $M$, and its resolving semigroup $T_t^{\al}$ satisfies
\eqref{eqLipcontnonlinearsemigroup} uniformly in $\al$.

Next, the equation
\begin{equation}
\label{eqforderparred}
 \frac{d}{dt}(g,\xi_t(\al))
 =(A^{\al}[\mu_t^{\al}]g, \xi_t (\al))+(D_{\xi_t(\al)}A^{\al}[\mu_t^{\al}]g,\mu_t^{\al})
\end{equation}
has form \eqref{eqdefnonhomgeneratordual3} with $F_s$ specified by \eqref{eqdualrepforfirstder}, i.e.
\[
(F_sg,\xi)=(F^{\al}[\mu_s^{\al}]g,\xi)=(D_{\xi} A^{\al}[\mu_s^{\al}]g, \mu_s^{\al}).
\]

From \eqref{eq1thbasicnonlinearwellposevar} it follows that
\begin{equation}
\label{eq1thbasicnonlinearwellposevar1}
\|F_s\|_{D\to D}=\sup_{\|g\|_D\le 1} \sup_{\|\xi\|_{D^{\star}} \le 1}
(D_{\xi} A^{\al}[\mu_s^{\al}] g,\mu_s^{\al})
\le c \|\xi\|_{D^{\star}}\|\mu\|^2_{B^{\star}},
\end{equation}
which is uniformly bounded for $\mu_s^{\al} \in M$.
Consequently, Theorem \ref{thperturbweak} yields a construction of the strongly continuous family
$\{\Phi^{t,r}\}$ in $D$ such that its dual propagator $\{\Psi^{r,t}=(\Phi^{t,r})'\}$
solves the Cauchy problem for equation \eqref{eqforderparred}.

By the Duhamel principle, the solution to equation \eqref{eqforderpar}
for $r\ge t$ with the initial condition $\xi_t$
can be written as
\begin{equation}
\label{eqfinalpropforparamder}
(g,\Pi^{r,t}[\al,\mu]\xi_t)=(\Phi^{t,r}[\al,\mu]g, \xi_t) +\int_t^r
\left(\frac{\pa A^{\al}[\mu_s^{\al}]}{\pa \al} \Phi^{s,r}[\al,\mu]g,\mu_s^{\al}\right) \, ds.
\end{equation}
\end{proof}

\begin{theorem}
\label{thbasicnonlinearwellposevarstr}
Under the assumptions of Theorem \ref{thbasicnonlinearwellposevar},
assume additionally that the backward propagators $\{U^{t,s;\al}[\xi_.]\}$, $t\le s$, represent strongly
 continuous bounded propagators also in $\tilde D$
 (and hence, by the last statement of Theorem \ref{thdualpropagext1}, the family $A^{\al}[\xi_t]: D\to B$
  also generates  $\{U^{t,s;\al}\}$, as a propagator in $D$, on $\tilde D$).
  Then, for each $\al, \mu \in M, \xi_0 \in D^{\star}$, the curve $\Pi^{s,t}[\al,\mu]\xi_0$ represents the unique
  weakly-$\star$ continuous in $D^{\star}$ solution to equation \eqref{eqforderpar} in $\tilde D^{\star}$.
 \end{theorem}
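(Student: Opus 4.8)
The plan is to note that Theorem \ref{thbasicnonlinearwellposevar} already supplies everything except uniqueness: it constructs the weakly-$\star$ continuous propagator $\Pi^{s,t}[\al,\mu]$ via \eqref{eqfinalpropforparamder} and verifies that $\Pi^{s,t}[\al,\mu]\xi_0$ solves \eqref{eqforderpar} in $\tilde D^{\star}$. What the added hypothesis buys is exactly the passage from the existence-only Theorem \ref{thperturbweak} (which rests on the weaker Theorem \ref{thdualpropagext} and underlies Theorem \ref{thbasicnonlinearwellposevar}) to the uniqueness-providing Theorem \ref{thdualpropagext1}. So I would first record that, since $\{U^{t,s;\al}[\xi_.]\}$ is now a strongly continuous family of bounded operators in $\tilde D$, the last statement of Theorem \ref{thdualpropagext1} gives that $A^{\al}[\mu_s^{\al}]:D\to B$ generates $\{U^{t,s;\al}\}$ as a propagator in $D$ on the invariant domain $\tilde D$; this places the triple $\tilde D\subset D\subset B$ (with $A_s=A^{\al}[\mu_s^{\al}]$) under the full hypotheses of Theorem \ref{thdualpropagext1}.

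Next I would reduce uniqueness for the inhomogeneous equation \eqref{eqforderpar} to uniqueness for its homogeneous linear part \eqref{eqforderparred}. The forcing term $(\pa_\al A^{\al}[\mu_t^{\al}]g,\mu_t^{\al})$ depends only on $t$ through the already-determined curve $\mu_t^{\al}=T_t^{\al}(\mu)$ and not on the unknown, while $(D_{\xi_t}A^{\al}[\mu_t^{\al}]g,\mu_t^{\al})$ is linear in $\xi_t$ by assumption (v). Hence if $\xi_t^1,\xi_t^2$ are two weakly-$\star$ continuous (in $D^{\star}$) solutions of \eqref{eqforderpar} in $\tilde D^{\star}$ with the same initial value, their difference $\zeta_t=\xi_t^1-\xi_t^2$ is a weakly-$\star$ continuous curve in $D^{\star}$ with vanishing initial data that satisfies, for every $f\in\tilde D$,
\[
\frac{d}{dt}(f,\zeta_t)=(A^{\al}[\mu_t^{\al}]f,\zeta_t)+(D_{\zeta_t}A^{\al}[\mu_t^{\al}]f,\mu_t^{\al}),
\]
that is \eqref{eqforderparred} with zero initial condition.

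Then I would recognise this homogeneous equation as \eqref{eqdefnonhomgeneratordual3} for $A_s=A^{\al}[\mu_s^{\al}]$ and the perturbation $F_s$ of \eqref{eqdualrepforfirstder}, $(F_sf,\zeta)=(D_\zeta A^{\al}[\mu_s^{\al}]f,\mu_s^{\al})$, which is a bounded (by \eqref{eq1thbasicnonlinearwellposevar1}) strongly continuous family in $D$, the latter following from the $D^{\star}$-continuity of $s\mapsto\mu_s^{\al}$ and the continuity of $F^{\al}[\cdot]$ in assumption (vi). Because we are now under the hypotheses of Theorem \ref{thdualpropagext1}, the perturbation-series propagator $\{\Phi^{t,r}\}$ of \eqref{eqperturbseriesforprop} for the pair $(D,\tilde D)$ is generated on $\tilde D$ by $\{A^{\al}[\mu_s^{\al}]+F_s\}$, so Theorem \ref{thdualpropag}(ii), applied with this $\Phi^{t,r}$ in the role of $U^{t,r}$ on the pair $(D,\tilde D)$, asserts that $\{\Psi^{r,t}=(\Phi^{t,r})'\}$ yields the unique weakly-$\star$ continuous in $D^{\star}$ solution of \eqref{eqdefnonhomgeneratordual3} in $\tilde D^{\star}$. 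Applied to $\zeta_t$ with vanishing data this forces $\zeta_t\equiv 0$, hence $\xi_t^1=\xi_t^2$; together with the existence half from Theorem \ref{thbasicnonlinearwellposevar} this identifies $\Pi^{s,t}[\al,\mu]\xi_0$ as the unique such solution.

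I expect the only genuine obstacle to be bookkeeping rather than estimation. One must keep straight that the uniqueness of Theorem \ref{thdualpropag}(ii) is invoked not for the original triple but for the shifted pair $(D,\tilde D)$ carrying the perturbed generator $A^{\al}[\mu_s^{\al}]+F_s$, and that its uniqueness class — curves weakly-$\star$ continuous in the dual $D^{\star}$ of the big space, solving the equation when tested against $\tilde D$, i.e. in $\tilde D^{\star}$ — coincides exactly with the class in which our theorem claims uniqueness. The single place where the new assumption is actually consumed is in deducing, via the last statement of Theorem \ref{thdualpropagext1}, that strong continuity and boundedness of $\{U^{t,s;\al}\}$ in $\tilde D$ upgrade the mere existence of Theorem \ref{thbasicnonlinearwellposevar} to generation on $\tilde D$, which is the property that unlocks the uniqueness argument.
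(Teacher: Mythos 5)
Your proposal is correct and follows essentially the same route as the paper: the paper's proof is a one-line reference to the ``simple arguments given before Theorem \ref{thperturbweak}'', which are precisely the steps you spell out --- the added strong continuity in $\tilde D$ yields, via the last statement of Theorem \ref{thdualpropagext1}, generation of $\{U^{t,s;\al}\}$ on $\tilde D$; then Theorem \ref{thperturbationtheoryforpropagators}(ii) applied to the pair $(D,\tilde D)$ gives generation of $\{\Phi^{t,r}\}$ on $\tilde D$ by $\{A_t+F_t\}$, and Theorem \ref{thdualpropag}(ii) gives uniqueness for \eqref{eqdefnonhomgeneratordual3} in the class of weakly-$\star$ continuous curves in $D^{\star}$ tested against $\tilde D$. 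Your explicit reduction of the inhomogeneous equation \eqref{eqforderpar} to the homogeneous \eqref{eqforderparred} by differencing two solutions is exactly the step the paper leaves implicit, so this is a faithful (and more detailed) reconstruction of the intended argument.
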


 \begin{proof} This is a straightforward extension of Theorem \ref{thbasicnonlinearwellposevar}, obtained by taking into account the simple arguments given before Theorem \ref{thperturbweak}.
 \end{proof}

We shall not further pay attention to somewhat complicated details arising under the conditions
 of Theorem \ref{thbasicnonlinearwellposevar}, but will use more natural conditions of
Theorem \ref{thbasicnonlinearwellposevarstr}.

We complete this section by an additional stability result for $\Pi^{s,t}$.

\begin{theorem}
\label{thbasicnonlinearwellposevarcor}
Under the assumptions of Theorem \ref{thbasicnonlinearwellposevarstr},
suppose that

(i) in addition to \eqref{eqAisLiponxifam} and \eqref{eq1thbasicnonlinearwellposevar},
  one has the same properties for the pair $(\tilde D,D)$, i.e.
\begin{equation}
\label{eqAisLiponxifam1}
 \|A[\xi]-A[\eta]\|_{\tilde D\to D} \le
 c\|\xi-\eta\|_{D^{\star}}, \quad \xi, \eta \in
 B^{\star},
\end{equation}
\begin{equation}
\label{eq2thbasicnonlinearwellposevar}
\|D_{\xi} A^{\al}[\mu]\|_{\tilde D\to D} \le c \|\mu\|_{B^{\star}}\|\xi \|_{D^{\star}},
\end{equation}

(ii) derivatives of $A^{\al}[\mu]$ are Lipschitz in the norm-topology of $D^{\star}$,
more precisely:
\begin{equation}
\label{eq3thbasicnonlinearwellposevar}
\|\frac{\pa A^{\al}[\mu]}{\pa \al}-\frac{\pa A^{\al}[\eta]}{\pa \al}\|_{\tilde D\to D}
 \le c \|\mu-\eta\|_{D^{\star}},
\end{equation}
\begin{equation}
\label{eq4thbasicnonlinearwellposevar}
\|D_{\xi} (A^{\al}[\mu]-A^{\al}[\nu])\|_{D\to B} \le c \|\mu-\eta\|_{D^{\star}}\|\xi \|_{D^{\star}}.
\end{equation}

Suppose now that $\mu_0^{\al}(n) \to \mu_0^{\al}$ in the norm-topology of $D^{\star}$,
as $n\to \infty$ for each $\al$. Then
  $\Pi^{s,t}[\al,\mu_0^{\al}(n)]\xi_0 \to \Pi^{s,t}[\al,\mu_0^{\al}(n)]\xi_0$ weakly-$\star$
  in $D^{\star}$ and in the norm topology of $\tilde D^{\star}$.
\end{theorem}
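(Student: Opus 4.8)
The plan is to reduce the convergence of the linearized propagators $\Pi^{s,t}$ to two facts established earlier: the Lipschitz dependence \eqref{eqLipcontnonlinearsemigroup} of the nonlinear semigroup on its initial data, and the stability estimate \eqref{eq4thconvergepropagators} for dual propagators under perturbation of their generators. Write $\mu^{\al}_s(n)=T^{\al}_s(\mu^{\al}_0(n))$ and $\mu^{\al}_s=T^{\al}_s(\mu^{\al}_0)$ for the two driving trajectories and set $\kappa_n=\sup_{s\le r}\|\mu^{\al}_s(n)-\mu^{\al}_s\|_{D^{\star}}$. The first step is to observe that \eqref{eqLipcontnonlinearsemigroup} (which holds uniformly in $\al$) turns the hypothesis $\mu^{\al}_0(n)\to\mu^{\al}_0$ in $D^{\star}$ into $\kappa_n\le c(r,M)\|\mu^{\al}_0(n)-\mu^{\al}_0\|_{D^{\star}}\to 0$; every subsequent quantity is to be estimated by $\kappa_n$.

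Next I would check that all coefficients entering \eqref{eqforderpar} differ by $O(\kappa_n)$ once measured in the correct, refined topology. The drift difference $A^{\al}[\mu^{\al}_s(n)]-A^{\al}[\mu^{\al}_s]$ is $O(\kappa_n)$ in the $\tilde D\to D$ norm by \eqref{eqAisLiponxifam1}. For the first-variation term $F_s$, defined through $(F_sg,\xi)=(D_{\xi}A^{\al}[\mu^{\al}_s]g,\mu^{\al}_s)$, and for the source $\ell_s(g)=(\pa A^{\al}[\mu^{\al}_s]/\pa\al\,g,\mu^{\al}_s)$, I would test on $g\in\tilde D$ and split each difference into (a) a piece in which the $\mu$-dependence of the operator is varied, controlled by \eqref{eq4thbasicnonlinearwellposevar} and \eqref{eq3thbasicnonlinearwellposevar} together with the boundedness of $M$ in $B^{\star}$, and (b) a piece in which the same operator is paired against $\mu^{\al}_s(n)-\mu^{\al}_s$; the key point is that \eqref{eq2thbasicnonlinearwellposevar} (resp. the $\tilde D\to D$ boundedness of $\pa A^{\al}/\pa\al$) places the relevant image in $D$, so that the $D^{\star}$-smallness of $\mu^{\al}_s(n)-\mu^{\al}_s$ may legitimately be used. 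This gives $\|F^{(n)}_s-F_s\|_{\tilde D\to D}\le c\kappa_n$ and $\|\ell^{(n)}_s-\ell_s\|_{\tilde D^{\star}}\le c\kappa_n$, while $\|\ell^{(n)}_s\|_{D^{\star}}$ stays bounded.

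With these estimates I would then attack the representation \eqref{eqfinalpropforparamder}, which writes $\Pi^{r,t}\xi_0$ as the perturbed dual propagator $\Psi^{r,t}\xi_0$ plus the Duhamel integral $\int_t^r\Psi^{r,s}\ell_s\,ds$. The stability of the \emph{unperturbed} dual propagators follows from Theorem \ref{thconvergepropagators} applied on the pair $(\tilde D,D)$, giving $\|(V^{r,t}[\mu^{\al}_.(n)]-V^{r,t}[\mu^{\al}_.])\xi\|_{\tilde D^{\star}}\le c\kappa_n\|\xi\|_{D^{\star}}$, and I would propagate this through the dual perturbation series \eqref{eqperturbseriesforpropdual} (together with $\|F^{(n)}_s-F_s\|_{\tilde D\to D}\le c\kappa_n$) to control $\Psi^{r,t}[\mu^{\al}_.(n)]-\Psi^{r,t}[\mu^{\al}_.]$ and, finally, the source contributions, yielding $\|(\Pi^{r,t}[\al,\mu^{\al}_0(n)]-\Pi^{r,t}[\al,\mu^{\al}_0])\xi_0\|_{\tilde D^{\star}}\to 0$. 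The weak-$\star$ convergence in $D^{\star}$ I would obtain separately and more softly: \eqref{eqfinalpropforparamder} shows that $\{\Pi^{r,t}[\al,\mu^{\al}_0(n)]\xi_0\}$ is bounded in $D^{\star}$, hence relatively weakly-$\star$ compact; passing to the limit in \eqref{eqforderpar} tested on $f\in\tilde D$ (the coefficients converging as in the previous paragraph) identifies every weak-$\star$ limit point with the unique $\tilde D^{\star}$-solution furnished by Theorem \ref{thbasicnonlinearwellposevarstr}, so the whole sequence converges.

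The step I expect to be the main obstacle is the propagation of the stability estimate through the perturbation series, i.e. the passage from the clean bound on the unperturbed $V^{r,t}[\mu^{\al}_.(n)]-V^{r,t}[\mu^{\al}_.]$ to a bound on the perturbed $\Psi^{r,t}$ and on the source term $\int_t^r\Psi^{r,s}(\ell^{(n)}_s-\ell_s)\,ds$. The difficulty is structural: because the trajectories converge only in the weak norm of $D^{\star}$, every coefficient difference is small in $\tilde D^{\star}$ but merely bounded in $D^{\star}$, whereas the unperturbed generator and $\pa A^{\al}/\pa\al$ map $D$ into $D$ and $B$ rather than into $\tilde D$, and, crucially, the first-variation perturbation $F_s$ takes $\tilde D$ into $D$ but not into $\tilde D$, so $\Phi^{s,r}$ gains no $\tilde D$-regularity beyond its leading term $U^{s,r}$. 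Consequently one cannot simply invoke Theorem \ref{thconvergepropagators} for the perturbed propagator, nor pair a $\tilde D^{\star}$-small functional against a generic $D$-valued function; the estimate must be organized on the dual side, splitting each $\Phi^{s,r}f$ (for $f\in\tilde D$) into its $\tilde D$-regular leading part $U^{s,r}f$, on which the refined bounds act directly, and a $D$-valued remainder, whose contribution is absorbed using the uniform operator bounds and the weak-$\star$ $D^{\star}$ convergence already secured. Getting this bookkeeping of topologies to close, rather than any individual inequality, is the crux.
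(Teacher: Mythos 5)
Your proposal reproduces the architecture of the paper's own proof almost step for step. The paper also (a) converts the hypothesis into $\kappa_n=\sup_{s,\al}\|\mu_s^{\al}(n)-\mu_s^{\al}\|_{D^{\star}}\to 0$ via \eqref{eqLipcontnonlinearsemigroup}; (b) derives the $\LC(\tilde D,D)$-smallness of the differences of $A^{\al}[\cdot]$ from \eqref{eqAisLiponxifam1} and of $F_s$ by exactly your split, bounding $(D_{\xi}(A^{\al}[\mu]-A^{\al}[\eta])g,\mu)$ by \eqref{eq4thbasicnonlinearwellposevar} and $(D_{\xi}A^{\al}[\eta]g,\mu-\eta)$ by \eqref{eq2thbasicnonlinearwellposevar}; (c) applies Theorem \ref{thconvergepropagators} on the pair $(\tilde D,D)$ to the unperturbed propagators; and (d) finishes with a four-term decomposition of \eqref{eqfinalpropforparamder} in which \eqref{eq3thbasicnonlinearwellposevar} handles the $\pa A^{\al}/\pa\al$ differences — your ``source contributions''. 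Your route to the weak-$\star$ statement (boundedness plus identification of limit points via the uniqueness in Theorem \ref{thbasicnonlinearwellposevarstr}) is sound, though unnecessary: once $\tilde D^{\star}$-norm convergence and $D^{\star}$-boundedness are known, weak-$\star$ convergence in $D^{\star}$ follows by density of $\tilde D$ in $D$.

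The divergence, and the gap, is at the perturbed propagators. The paper does not pass through the dual series \eqref{eqperturbseriesforpropdual} or any splitting of $\Phi^{s,r}$: it simply applies Theorem \ref{thconvergepropagators} a second time, to $\Phi^{t,s}(n)$ and $\Phi^{t,s}$ viewed as propagators generated by $\{A^{\al}[\mu_{\cdot}^{\al}]+F_{\cdot}\}$ on the pair $(\tilde D,D)$, and then gets the statement for $\Psi=(\Phi)'$ by duality. Your reluctance to do the same rests on a real observation — nothing in the hypotheses makes $\tilde D$ invariant under $\Phi$ or makes $\{A_t+F_t\}$ generate $\Phi$ on $\tilde D$, since $F_s$ is only bounded $D\to D$ and $\tilde D\to D$, never into $\tilde D$ — so you have correctly located the point where the paper is terse. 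But you do not close it, and your sketched repair does not close it either: writing $\Phi^{s,r}f=U^{s,r}f+R^{s,r}f$ with $R^{s,r}f\in D$, the dangerous term is $[F_s(n)-F_s]R^{s,r}f$, whose estimate reduces to the pairing $(D_{\xi}A^{\al}[\mu_s^{\al}]g,\mu_s^{\al}(n)-\mu_s^{\al})$ with $g\in D$ only; there $D_{\xi}A^{\al}[\mu_s^{\al}]g$ lies merely in $B$ (by \eqref{eq1thbasicnonlinearwellposevar}), so the $D^{\star}$-smallness of $\mu_s^{\al}(n)-\mu_s^{\al}$ cannot be invoked, and weak-$\star$ convergence supplies no uniformity over the unit balls of $D$ and $D^{\star}$, which is exactly what the $\tilde D^{\star}$-norm of $[\Pi(n)-\Pi]\xi_0$ demands (uniformity in $f$ from the unit ball of $\tilde D$, even for fixed $\xi_0$). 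So, as written, your attempt is incomplete precisely at its central estimate. The way to make either argument rigorous is an additional regularity hypothesis on the perturbation — e.g.\ that $F^{\al}[\mu]$ is uniformly bounded $\tilde D\to\tilde D$ (or $D\to\tilde D$), as happens in the intended applications where $F$ is smoothing — after which the paper's direct invocation of Theorem \ref{thconvergepropagators}, or your series argument, goes through verbatim.
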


\begin{proof}
We shall use the notation for propagators introduced above adding dependence on $n$ for all objects
constructed from $\mu_0^{\al}(n)$.

By \eqref{eqLipcontnonlinearsemigroup} we conclude that $T_t^{\al}\mu_0^{\al}(n) \to T_t^{\al}\mu_0^{\al}$,
as $n\to \infty$, in the norm-topology of $D^{\star}$ uniformly in $t,\al$. Hence, by \eqref{eqAisLiponxifam1}
and Theorem \ref{thconvergepropagators} (applied to the pair of spaces $(\tilde D,D)$),
\[
U^{t,s;\al}[T_.^{\al}\mu_0^{\al}(n)] \to U^{t,s;\al}[T_.^{\al}\mu_0^{\al}]
\]
in the norm-topology of $\LC (D,D)$. Similarly,
by \eqref{eq2thbasicnonlinearwellposevar} and \eqref{eq3thbasicnonlinearwellposevar},
\[
|(D_{\xi} A^{\al}[\mu]g,\mu)
-(D_{\xi} A^{\al}[\eta]g,\eta)|
\]
\[
\le |(D_{\xi} (A^{\al}[\mu]-A^{\al}[\eta])g,\mu)|+|(D_{\xi} A^{\al}[\eta]g,\mu-\eta)|
\]
\[
\le c\|\mu-\eta \|_{D^{\star}}\|g\|_{\tilde D} \|\xi\|_{D^{\star}}
 (\|\mu\|_{B^{\star}}+\|\eta\|_{B^{\star}}),
 \]
 so that
 \[
 \|F_s[\mu]g-F_s[\eta]g\|_D \le
c\|\mu-\eta \|_{D^{\star}}\|g\|_{\tilde D}
 (\|\mu\|_{B^{\star}}+\|\eta\|_{B^{\star}}).
 \]
and thus by Theorem \ref{thconvergepropagators},
\[
\Phi^{t,s}[\al, T_.^{\al}\mu_0^{\al}(n)] \to \Phi^{t,s}[\al, T_.^{\al}\mu_0^{\al}],
\quad n\to \infty,
\]
in the norm-topology of $\LC (D,D)$. Consequently, again by Theorem \ref{thconvergepropagators},
 \[
\Psi^{s,t}[\al, T_.^{\al}\mu_0^{\al}(n)]\xi \to \Psi^{s,t}[\al, T_.^{\al}\mu_0^{\al}]\xi
\]
weakly-$\star$ in $D^{\star}$ and in the norm-topology of $\tilde D^{\star}$, for any
$\xi \in D^{\star}$.

Finally, from \eqref{eqfinalpropforparamder} it follows that
\[
\left(g,\Pi^{r,0}[\al,\mu](n) \xi
-\Pi^{r,0}[\al,\mu] \xi\right)
\]
\[
=((\Phi^{0,r}(n)-\Phi^{0,r})g, \xi)
 + \int_0^r \left(\frac{\pa A^{\al}[\mu_s^{\al}(n)]}{\pa \al}
  (\Phi^{s,r}(n)-\Phi^{s,r})g, \mu_s^{\al}\right)
\]
\[
+\int_0^r \left(\frac{\pa A^{\al}[\mu_s^{\al}(n)]}{\pa \al}
  \Phi^{s,r}(n)g, \mu_s^{\al}(n)-\mu_s^{\al}\right)
  +\int_0^r \left(\left(\frac{\pa A^{\al}[\mu_s^{\al}(n)]}{\pa \al}
  -\frac{\pa A^{\al}[\mu_s^{\al}]}{\pa \al}\right)
  \Phi^{s,r}g, \mu_s^{\al}\right),
\]
which allows one to conclude that
\[
\|\Pi^{r,0}[\al,\mu](n) \xi
-\Pi^{r,0}[\al,\mu] \xi\|_{\tilde D^{\star}}
 \to 0,
\]
as $n\to \infty$, as required.
\end{proof}

\section{Sensitivity analysis for nonlinear propagators}

Our final question is whether the solution $\xi_t$ constructed in Theorem \ref{thbasicnonlinearwellposevar}
 does in fact yield the derivative \eqref{eqdefderwithpar}.
 The difference with the standard case, discussed in textbooks on ODE in Banach spaces, lies in the fact that
 the solution to the linearized equation \eqref{eqforderpar} exists in a different space that the nonlinear curve
 $\mu_t$ itself.

\begin{theorem}
\label{thnonlsensit}
Under the assumptions of Theorem \ref{thbasicnonlinearwellposevarcor},
let $\xi_0=\xi \in B^{\star}$ and is defined by \eqref{eqdefderwithparinit},
where the derivative exists in the norm-topology of $\tilde D^{\star}$ and weakly-${\star}$ in $D^{\star}$.
Then the unique solution $\xi_t[\al]= \Pi^{t,0}[\al,\mu^{\al}_0] \xi$
of equation \eqref{eqforderpar} constructed in the
Theorem \ref{thbasicnonlinearwellposevarstr} satisfies  \eqref{eqdefderwithpar},
where the derivative exists in the norm-topology of $\tilde D^{\star}$ and weakly-${\star}$ in $D^{\star}$.
\end{theorem}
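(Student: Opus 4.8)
The plan is to show directly that the difference quotient of the nonlinear flow in $\al$ converges to $\xi_t[\al]$, identifying the limit through the uniqueness and stability results already established. For small $h\neq0$ introduce the finite difference
\[
\zeta_t^h=\frac1h\left(\mu_t^{\al+h}-\mu_t^{\al}\right),\qquad \mu_t^{\be}=T_t^{\be}\mu_0^{\be},
\]
which by \eqref{eqLipcontnonlinearsemigroup} stays uniformly bounded in $D^{\star}$ for $t\in[0,r]$ and whose initial value $\zeta_0^h\to\xi$ by hypothesis. Subtracting the two copies of \eqref{eqgennonlinearprobparam} for the parameters $\al$ and $\al+h$, dividing by $h$, and inserting the intermediate terms $(A^{\al+h}[\mu_t^{\al+h}]g,\mu_t^{\al})$ and $(A^{\al}[\mu_t^{\al+h}]g,\mu_t^{\al})$, one obtains — using the linearity of $D_{\eta}$ in $\eta$ and the fundamental theorem of calculus in the Gateaux variable — that, for all $g\in D$,
\begin{equation}
\label{eqplanzeta}
\frac{d}{dt}(g,\zeta_t^h)=(A^{\al+h}[\mu_t^{\al+h}]g,\zeta_t^h)
+\int_0^1\left(D_{\zeta_t^h}A^{\al}[\mu_t^{\al}+sh\zeta_t^h]g,\mu_t^{\al}\right)ds
+\frac1h\left((A^{\al+h}-A^{\al})[\mu_t^{\al+h}]g,\mu_t^{\al}\right).
\end{equation}
This is a non-homogeneous linear equation of the type \eqref{eqdefnonhomgeneratordual3}: the base generator is $A^{\al+h}[\mu_t^{\al+h}]$, the middle (integrated Gateaux) term is a bounded perturbation acting on $\zeta_t^h$, and the last term is a source.

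Before passing to the limit I would establish an a priori bound $\sup_{t\le r}\|\zeta_t^h\|_{D^{\star}}\le C$ uniform in small $h$. The crucial point is that by \eqref{eq1thbasicnonlinearwellposevar} the middle term of \eqref{eqplanzeta} defines, for each $t$, an element of $D^{\star}$ of norm at most $c\|\zeta_t^h\|_{D^{\star}}\|\mu_t^{\al}\|^2_{B^{\star}}$ \emph{uniformly in the Gateaux base point}, while the source is bounded uniformly by \eqref{eq3thbasicnonlinearwellposevar} and the existence of $\pa A^{\al}/\pa\al$. Treating both as inhomogeneities against the uniformly bounded dual propagator generated by $A^{\al+h}[\mu_t^{\al+h}]$ and invoking Gronwall then yields the bound, since $\|\zeta_0^h\|_{D^{\star}}$ is bounded. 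This bound is what closes the only genuine nonlinearity of \eqref{eqplanzeta}, namely the self-referential dependence of the base point $\mu_t^{\al}+sh\zeta_t^h$ on $\zeta_t^h$: it forces $sh\zeta_t^h\to0$ in $D^{\star}$, so that by \eqref{eq4thbasicnonlinearwellposevar} and the continuity of $\mu\mapsto D_{\xi}A^{\al}[\mu]$ assumed in Theorem \ref{thbasicnonlinearwellposevar}(v) the middle term converges to the feedback $F_s$ of \eqref{eqdualrepforfirstder} along $\mu_{\cdot}^{\al}$.

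With the a priori bound secured, I would represent $\zeta_t^h$ through the Duhamel formula \eqref{eqfinalpropforparamder} built from the $h$-dependent data and let $h\to0$. By \eqref{eqLipcontnonlinearsemigroup} (dependence on the initial data) together with Theorem \ref{thbasicnonlinearwellposestabil} applied to $\tilde A=A^{\al+h}$, $A=A^{\al}$ (dependence on $\al$, with the constant $\kappa\to0$ by assumption (iv)), one has $\mu_t^{\al+h}\to\mu_t^{\al}$ in $D^{\star}$ uniformly in $t\le r$. Hence, by \eqref{eqAisLiponxifam} and \eqref{eqAisLiponxifam1}, the base generators converge to $A^{\al}[\mu_t^{\al}]$ in both $\LC(D,B)$ and $\LC(\tilde D,D)$; the middle terms converge to $F_s$ as just noted; and the sources converge to $(\pa A^{\al}[\mu_t^{\al}]/\pa\al\,g,\mu_t^{\al})$. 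Repeating the propagator-convergence argument of Theorem \ref{thbasicnonlinearwellposevarcor} (which rests on Theorem \ref{thconvergepropagators}) then gives convergence of the propagators associated with the $h$-data to those of Theorem \ref{thbasicnonlinearwellposevar}, while the converging source integrates term-by-term in \eqref{eqfinalpropforparamder}. Combined with $\zeta_0^h\to\xi$, this yields $\zeta_t^h\to\Pi^{t,0}[\al,\mu_0^{\al}]\xi=\xi_t[\al]$ weakly-$\star$ in $D^{\star}$ and in the norm of $\tilde D^{\star}$, which is precisely \eqref{eqdefderwithpar}.

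The hard part is the a priori bound together with the two-topology bookkeeping it supports. The nonlinear curves $\mu_t^{\al}$ are controlled only in $D^{\star}$, whereas the linearized solution $\xi_t$ and its convergence are read in the weaker $\tilde D^{\star}$; consequently every coefficient in \eqref{eqplanzeta} must be shown to converge in the precise operator topologies ($\LC(D,B)$ and $\LC(\tilde D,D)$) demanded by Theorem \ref{thconvergepropagators}, and it is exactly the uniform bound on $\zeta_t^h$ that decouples the Gateaux base point and licenses these convergences.
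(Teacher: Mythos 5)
Your argument is correct (at the level of rigor the paper itself operates at), but it is a genuinely different proof from the paper's. You differentiate the nonlinear flow directly: form the quotient $\zeta_t^h=h^{-1}(\mu_t^{\al+h}-\mu_t^{\al})$, write for it a perturbed linearized equation whose only nonlinearity sits in the Gateaux base point, kill that nonlinearity with a uniform $D^{\star}$-bound, and identify the limit $h\to 0$ through the propagator-convergence machinery of Theorems \ref{thconvergepropagators} and \ref{thbasicnonlinearwellposevarcor}. The paper never touches the difference quotient: it approximates the generators $A^{\al}_s$ by \emph{bounded} operators $A^{\al}_s(n)$ (Yosida or jump-type approximations), for which both the nonlinear and the linearized equations are strongly well posed in $B^{\star}$ and $D^{\star}$, imports the differentiability $\xi_t[\al](n)=\pa\mu_t^{\al}(n)/\pa\al$ from the classical theory of ODEs in Banach spaces, writes the resulting identity $\mu_t^{\al}(n)-\mu_t^{\al_0}(n)=\int_{\al_0}^{\al}\xi_t[\be](n)\,d\be$, and then passes to the limit $n\to\infty$ in this identity, using Theorem \ref{thbasicnonlinearwellposestabil} for the convergence of $\mu^{\al}_t(n)$ and the argument of Theorem \ref{thbasicnonlinearwellposevarcor} for the convergence of $\xi_t[\al](n)$; differentiating the limiting identity \eqref{eq2thnonlsensit} in $\al$, via continuity of $\be\mapsto\xi_t[\be]$, finishes the proof. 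What each route buys: the paper's proof avoids all manipulation of Gateaux increments and of your self-referential middle term, but it requires an extra structural ingredient that your proof does not need, namely the existence of bounded approximations $A_s^{\al}(n)$ satisfying all the hypotheses uniformly (asserted for Feller-type generators, not derived from the stated assumptions); your proof stays strictly inside the given hypotheses, at the price of the two-topology bookkeeping you describe. Two simplifications for your write-up: the a priori bound need not go through Gronwall (which would itself require the Duhamel/uniqueness justification of Theorem \ref{thdualpropagext1}) --- it follows at once from Theorem \ref{thbasicnonlinearwellposestabil} with $\kappa=O(h)$ (assumption (iv)) together with the Banach--Steinhaus boundedness of the weak-$\star$ convergent initial quotients $\zeta_0^h$; and once that bound is available, estimate \eqref{eq4thbasicnonlinearwellposevar} lets you absorb the discrepancy between the integrated Gateaux term and the limiting feedback $F_s$ of \eqref{eqdualrepforfirstder} into an $O(h)$ source, so that the $h$-equation has exactly the same perturbation operator as the limit equation and Theorem \ref{thconvergepropagators} applies verbatim.
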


\begin{proof}
The main idea is to approximate $A_s^{\al}$ by bounded operators, use the standard sensitivity theory for
vector valued ODE and then obtain the required result by passing to the limit.
To carry our this program,
let us pick up a family of operators $A_s^{\al}(n)$, $n=1,2,...$,
bounded in $B$ and $D$, that satisfy all the same conditions
as $A_s^{\al}$ and such that $\|(A_s^{\al}(n)-A_s^{\al})g\|_B \to 0$ for all $g\in D$ and uniformly for all $\al$ and
$g$  from bounded subsets of $\tilde D$. As such approximation, one can use either standard Iosida approximation
(which is convenient in abstract setting) or, in case of the generators of Feller Markov processes, generators
 of approximating pure-jump Markov processes.
As in the proof of Theorem \ref{thbasicnonlinearwellposevarcor},
we shall use the notation for propagators introduced in the previous section
 adding dependence on $n$ for all objects
constructed from $A_s^{\al}(n)$.

Since $(A_s^\al(n))'$ are bounded linear operators in
$B^{\star}$ and $D^{\star}$, the
equation for $\mu_t$ and $\xi_t$ are both well posed in the strong
sense in both $B^{\star}$ and $D^{\star}$.
 Hence the standard result on the differentiation with respect
 to initial data is applicable (see e.g. \cite{Mart} or Appendix D in \cite{Ko10nonlbook}) leading to the conclusion that
 $\xi_t[\al](n)$ represent the derivatives of $\mu_t^{\al}(n)$ in both $B^{\star}$ and $D^{\star}$.

  Consequently
 \begin{equation}
\label{eq1thnonlsensit}
 \mu_t^{\al}(n)-\mu_t^{\al_0}(n)= \int_{\al_0}^{\al} \xi_t[\be](n)\, d\beta
\end{equation}
holds as an equation in $D^{\star}$ (and in
$B^{\star}$ whenever $\xi\in B^{\star}$).

  Using Theorem
\ref{thbasicnonlinearwellposestabil} we
deduce the convergence of $\mu_t^{\al}(n)$ to $\mu^{\al}_t$ in the norm-topology of
$D^{\star}$. Consequently, using Theorem
\ref{thbasicnonlinearwellposevarcor} we can
deduce the convergence of $\xi_t^{\al}(n)$ to $\xi^{\al}_t$ in the norm-topology of
$\tilde D^{\star}$. Hence, we
can pass to the limit $n\to \infty$ in equation \eqref{eq1thnonlsensit} in the
norm topology of $\tilde D^{\star}$ yielding the equation
\begin{equation}
\label{eq2thnonlsensit}
 \mu_t^{\al}-\mu_t^{\al_0}= \int_{\al_0}^{\al} \xi_t[\be]\, d\beta,
\end{equation}
where all objects are well defined in
$(C^1_{\infty}(\R^d))^{\star}$.

This equation together with continuous dependence of
$\xi_t$ on $\al$ (which is proved in literally the same way as continuous dependence on $\mu$
 in Theorem \ref{thbasicnonlinearwellposevarcor}) implies \eqref{eqdefderwithpar} in the
sense required.
\end{proof}

Applying Theorem \ref{thnonlsensit} for the case of $A_s$ not depending on any additional parameter,
we obtain directly the smooth dependence of the nonlinear evolution $\mu_t$ on the initial data.
Namely, for $\mu_t=\mu_t(\mu_0)$, a solution to \eqref{eqkineqmeanfield1} with the initial condition $\mu_0$,
  we can define the Gateaux derivatives
\begin{equation}
\label{eqdeffirstGatderwithinitial}
 \xi_t(\mu_0,\xi) =D_{\xi}\mu_t(\mu_0)
 = \lim_{s \rightarrow 0_+} \frac{1}{s}(\mu_t (\mu_0 + s \xi) - \mu_t(\mu_0))
\end{equation}

 Differentiating \eqref{eqkineqmeanfield1}
with respect to initial data yields
\begin{equation}
\label{eqforfirstdergenkineqabs}
 \frac{d}{dt}(g,\xi_t(\mu_0,\xi))
 =(A[\mu_t]g, \xi_t (\mu_0,\xi))+(D_{\xi_t(\mu_0,\xi)}A[\mu_t]g,\mu_t),
\end{equation}
which represents a simple particular case of equation \eqref{eqforderpar}.
Hence, Theorem \ref{thnonlsensit} implies that, under the assumptions of this theorem (that do not involve
the dependence on $\al$), the derivative \eqref{eqdeffirstGatderwithinitial} does exists
and is given by the unique solution to equation \eqref{eqforfirstdergenkineqabs}
with the initial condition $\xi_0=\xi$, However, this existence and well-posedness hold
 weakly-$\star$ in $\tilde D^{\star}$, not in $B^{\star}$, as the nonlinear evolution itself.

 \section{Back to nonlinear Markov semigroups}

 We developed the theory in the most abstract form, for general nonlinear evolutions
 in Banach spaces, not even using positivity. This unified exposition allows one to obtain
 various concrete evolutions as a direct consequence of one general result. The main application we have in mind
 concerns the families $A[\mu]$ of the L\'evy-Kchintchin type form (with variable coefficients):
 \[
A[\mu]u(x)=\frac{1}{2} (G_{\mu}(x) \nabla, \nabla)u(x) +(b_{\mu}(x), \nabla u(x))
\]
\begin{equation}
\label{eqLevytypegenwithmes}
+\int[u(x+y)-u(x)-(y,\nabla u(x))\1_{B_1}(y)]\nu_{\mu} (x, dy),
\end{equation}
where $\nu_{\mu} (x,.)$ is a L\'evy measure for all $x\in \R^d, \mu \in \PC(\R^d)$.
The basic examples were given in the introduction.

Applied to {\it nonlinear L\'evy process} specified by the families
\eqref{eqLevygennonlinearrep}, our general results yield the following.

\begin{theorem}
\label{thexistencenonlinearLevy} Suppose the coefficients of a family
\eqref{eqLevygennonlinearrep} depend on $\mu$  Lipschitz continuously
in the norm of the Banach space $(C^2_{\infty}(\R^d))'$ dual to
$C^2_{\infty}(\R^d)$, i.e.
 \[
\|G(\mu)-G(\eta)\|+\|b(\mu)-b(\eta)\|
 +\int \min(1,|y|^2)|\nu(\mu,dy)-\nu(\eta,dy)|
 \]
\begin{equation}
\label{eqLipcondcoefnonlinearLevy}
  \le \kappa \|\mu-\eta\|_{(C^2_{\infty}(\R^d))'}
   =\kappa \sup_{\|f\|_{C^2_{\infty}(\R^d)}\le 1} |(f, \mu-\eta)|
\end{equation}
with constant $\kappa$. Then there exists a unique nonlinear
L\'evy semigroup generated by $A_{\mu}$, and hence a unique
nonlinear L\'evy process.
\end{theorem}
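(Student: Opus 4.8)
The plan is to read Theorem \ref{thexistencenonlinearLevy} as a direct instance of the abstract well-posedness result, Theorem \ref{thbasicnonlinearwellpose}, with its hypothesis (ii) supplied through the $T$-product corollary, Theorem \ref{thbasicnonlinearwellpose1}. Following Remark \ref{remarkbasicnonlinearwellpose}, I would fix the scale $B=C_\infty(\R^d)$, $D=C^2_\infty(\R^d)$ and $\tilde D=C^4_\infty(\R^d)$, together with $M=\PC(\R^d)$, which is bounded and, by that remark, norm-closed in $D^\star=(C^2_\infty(\R^d))'$. The choice $\tilde D=C^4_\infty$ is dictated by the fact that the L\'evy--Khintchin operator loses two derivatives, so that $A[\mu]\colon \tilde D\to D$ and $A[\mu]\colon D\to B$ are the relevant mappings. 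With these identifications $B^\star$ is the space of finite signed measures, and the hypothesis \eqref{eqLipcondcoefnonlinearLevy} is precisely a Lipschitz bound in the $D^\star$-norm on the characteristics $G(\mu)$, $b(\mu)$, $\nu(\mu,\cdot)$.

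First I would verify condition (i) of Theorem \ref{thbasicnonlinearwellpose}, namely $\|A[\mu]-A[\eta]\|_{D\to B}\le c\|\mu-\eta\|_{D^\star}$. Writing out $(A[\mu]-A[\eta])f$, the diffusion and drift parts are bounded by $\|G(\mu)-G(\eta)\|\,\|f\|_{C^2}$ and $\|b(\mu)-b(\eta)\|\,\|f\|_{C^1}$. For the integral part the key elementary estimate is that, for $f\in C^2_\infty(\R^d)$, the integrand satisfies $|f(x+y)-f(x)-(y,\nabla f(x))\1_{B_1}(y)|\le C\|f\|_{C^2}\min(1,|y|^2)$ uniformly in $x$ (second-order Taylor for $|y|\le 1$, a crude bound for $|y|>1$), so this part is bounded by $C\|f\|_{C^2}\int\min(1,|y|^2)|\nu(\mu,dy)-\nu(\eta,dy)|$. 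Summing the three contributions and invoking \eqref{eqLipcondcoefnonlinearLevy} gives condition (i). Taking $\eta$ to be a fixed reference measure, the same estimate shows the characteristics are uniformly bounded on $M$ (whose $D^\star$-diameter is at most $2$), which furnishes the uniform operator bounds $A[\mu]\colon \tilde D\to D$ and $A[\mu]\colon D\to B$ needed in hypothesis (ii') of Theorem \ref{thbasicnonlinearwellpose1}.

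Next I would check the generation hypotheses (iii')--(iv') of Theorem \ref{thbasicnonlinearwellpose1} for frozen $\mu$. Since the coefficients do not depend on $x$, each $A[\mu]$ is a spatially homogeneous L\'evy generator, so $e^{tA[\mu]}$ is convolution by the probability measure with Fourier transform $e^{t\psi_\mu}$, where $\psi_\mu$ is the L\'evy--Khintchin exponent. Because differentiation commutes with convolution and convolution by a probability measure is a contraction on every $C^k_\infty(\R^d)$, each $C^k_\infty(\R^d)$ is an invariant core with semigroup norm $\le 1$; in particular $D$ and $\tilde D$ are invariant with the exponential bound of (iii') holding with $K=0$. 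Conservativity, $A[\mu]\1=0$, makes the semigroup Markov, so its dual preserves $\PC(\R^d)$, giving (iv'). The last hypothesis of the $T$-product Theorem \ref{thbasicTproduct}, continuity of $t\mapsto A[\xi_t]f$ uniformly for $f$ in bounded subsets of $D$, is immediate from (i), exactly as in the proof of Theorem \ref{thbasicnonlinearwellpose1}. With (i) and (i')--(iv') in place, Theorem \ref{thbasicnonlinearwellpose1} supplies condition (ii), and Theorem \ref{thbasicnonlinearwellpose} then yields a unique solution semigroup $T_t$ on $\PC(\R^d)$ depending Lipschitz-continuously on the initial data in the $D^\star$-norm; this is the required nonlinear L\'evy semigroup, and its attached tangent family is the nonlinear L\'evy process.

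The step I expect to be the main obstacle is the frozen-coefficient generation, that is, verifying that each classical L\'evy generator $A[\mu]$ genuinely generates a Feller semigroup with $C^2_\infty$ and $C^4_\infty$ as invariant cores and with exponential bounds \emph{uniform in} $\mu\in M$. In the spatially homogeneous setting this reduces to standard convolution-semigroup theory handled through the Fourier multiplier $e^{t\psi_\mu}$, and the uniformity over $M$ follows from the uniform bound on the characteristics $\big(G(\mu),\,b(\mu),\,\int\min(1,|y|^2)\nu(\mu,dy)\big)$ established above. Everything else is routine once the scale $B\subset$-dual structure is fixed, since the abstract machinery of Sections on dual propagators, $T$-products and nonlinear propagators already packages the fixed-point and uniqueness arguments.
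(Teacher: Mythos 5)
Your proposal is correct and takes essentially the same route as the paper: the same scale $B=C_\infty(\R^d)$, $D=C^2_\infty(\R^d)$, $\tilde D=C^4_\infty(\R^d)$, $M=\PC(\R^d)$, the same reduction to the abstract well-posedness Theorem \ref{thbasicnonlinearwellpose} with the Lipschitz hypothesis \eqref{eqAisLiponxi} checked by the Taylor estimate $|f(x+y)-f(x)-(y,\nabla f(x))\1_{B_1}(y)|\le C\|f\|_{C^2}\min(1,|y|^2)$, and the same exploitation of spatial homogeneity (convolution/Fourier structure) for the auxiliary linear propagators. The only cosmetic difference is packaging: the paper constructs the intermediate time-inhomogeneous propagators directly via Fourier transform, deferring details to \cite{Ko10nonlbook}, whereas you route through the $T$-product corollary, Theorem \ref{thbasicnonlinearwellpose1}, applied to the frozen convolution semigroups $e^{tA[\mu]}$ --- which is exactly the mechanism the paper's own remark following Theorem \ref{thbasicnonlinearwellposevar} recommends, so your write-up amounts to filling in the details the paper leaves to the book.
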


\begin{proof}
The well-posedness of all intermediate propagators is obvious in case of L\'evy processes, because they are constructed via Fourier transform, literally like L\'evy semigroup (details are given in \cite{Ko10nonlbook}).
Of course here $M=\PC(\R^d)$, $D=C_{\infty}^2(\R^d)$, $\tilde D=C_{\infty}^4(\R^d)$.
\end{proof}

\begin{remark} Condition \eqref{eqLipcondcoefnonlinearLevy} is not
at all weird. It is satisfied, for instance, when the coefficients
$G$,$b$, $\nu$ depend on $\mu$ via certain integrals (possibly
multiple) with smooth enough densities, i.e. in a way that is usually
met in applications.
\end{remark}

Applied to processes {\it of order at most one} specified by the families
\eqref{eqshortgeneratornonlin}, our general results yield the following.

\begin{theorem}
\label{thnonlinearshortgen}
 Assume that for any $\mu \in \PC
(\R^d)$, $b(.,\mu) \in C^1(\R^d)$ and $\nabla \nu (x,\mu, dy)$
(gradient with respect to $x$) exists in the weak sense as a signed
measure and depends weakly continuous on $x$. Let the following
conditions hold.

(i) boundedness:
 \begin{equation}
\label{eqshortcondlevynonlin3}
 \sup_{x,\mu} \int \min(1,|y|) \nu (x,\mu,dy)
 <\infty, \quad \sup_{x,\mu}  \int \min(1,|y|) |\nabla \nu (x,\mu,dy)|
 <\infty,
 \end{equation}

 (ii) tightness: for any
$\ep>0$ there exists a $K>0$ such that
\begin{equation}
\label{eqshortcondlevynonlin1}
 \sup_{x,\mu} \int_{\R^d\setminus B_K} \nu (x,\mu,dy)
 <\ep, \quad \sup_{x,\mu}  \int_{\R^d\setminus B_K} |\nabla \nu (x,\mu,dy)|
 <\ep,
 \end{equation}
 \begin{equation}
\label{eqshortcondlevynonlin2}
 \sup_{x,\mu} \int_{B_{1/K}} |y|\nu (x,\mu,dy)< \ep,
 \end{equation}

 (iii) Lipschitz continuity:
\begin{equation}
\label{eqshortcondlevynonlin4}
 \sup_{x} \int \min(1,|y|) |\nu (x,\mu_1,dy)-\nu(x,\mu_2,dy)|
 \le c\|\mu_1-\mu_2\|_{(C^1_{\infty}(\R^d))^{\star}},
 \end{equation}

\begin{equation}
\label{eqshortcondlevynonlin5}
 \sup_{x} |b(x,\mu_1)-b(x,\mu_2)|
 \le c\|\mu_1-\mu_2\|_{(C^1_{\infty}(\R^d))^{\star}}
 \end{equation}
uniformly for bounded $\mu_1, \mu_2$.

Then the weak nonlinear Cauchy problem
\eqref{eqkineqmeanfield1} with $A_{\mu}$ given by
\eqref{eqshortgeneratornonlin} is well
posed, i.e. for any $\mu \in \MC(\R^d)$ it has a
unique solution $T_t(\mu) \in \MC (\R^d)$ (so that
\eqref{eqshortgeneratornonlin} holds for all $g\in
C^1_{\infty}(\R^d)$) preserving the norm, and the transformations
$T_t$ of $\PC (\R^d)$, $t\ge 0$, form
a semigroup depending Lipschitz continuously on time $t$ and the
initial data in the norm of $(C^1_{\infty}(\R^d))^{\star}$.
\end{theorem}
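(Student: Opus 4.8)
The plan is to obtain this as a direct application of the abstract well-posedness result Theorem~\ref{thbasicnonlinearwellpose}, with its condition~(ii) supplied through the corollary Theorem~\ref{thbasicnonlinearwellpose1} (i.e.\ via the $T$-product construction of Theorem~\ref{thbasicTproduct}). The first step is to fix the scale of Banach spaces adapted to a generator of order at most one, namely $B=C_\infty(\R^d)$, $D=C^1_\infty(\R^d)$ and $\tilde D=C^2_\infty(\R^d)$, together with $M=\PC(\R^d)$. That $\PC(\R^d)$ is bounded and closed in the norm topologies of $B^\star$ and $D^\star=(C^1_\infty(\R^d))^\star$ is exactly the content of Remark~\ref{remarkbasicnonlinearwellpose}. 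The key structural observation, which forces the choice $D=C^1_\infty$, is that under the finite-first-moment hypothesis one has the pointwise bound $|f(x+y)-f(x)|\le\min(2\|f\|_{C(\R^d)},\,|y|\,\|\nabla f\|_{C(\R^d)})\le 2\min(1,|y|)\,\|f\|_{C^1(\R^d)}$, so that $A_\mu f$ is well defined and bounded for $f\in C^1_\infty(\R^d)$.

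Next I would verify assumption~(i) of Theorem~\ref{thbasicnonlinearwellpose}, the Lipschitz estimate \eqref{eqAisLiponxi} in the form $\|A[\mu]-A[\eta]\|_{D\to B}\le c\|\mu-\eta\|_{D^\star}$. Writing
\[
(A[\mu]-A[\eta])f(x)=(b(x,\mu)-b(x,\eta),\nabla f(x))+\int(f(x+y)-f(x))(\nu(x,\mu,dy)-\nu(x,\eta,dy)),
\]
the drift term is controlled by \eqref{eqshortcondlevynonlin5} and the integral term, using the bound above, by $2\int\min(1,|y|)|\nu(x,\mu,dy)-\nu(x,\eta,dy)|$ and hence by \eqref{eqshortcondlevynonlin4}; both right-hand sides are $\le c\|\mu-\eta\|_{(C^1_\infty(\R^d))^\star}$, which is precisely $c\|\mu-\eta\|_{D^\star}$. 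Uniform boundedness of $A[\mu]$ as an operator $D\to B$ follows from \eqref{eqshortcondlevynonlin3} (together with the assumed boundedness of $b$), and boundedness $\tilde D\to D$ from differentiating $A_\mu f$ once in $x$: here one meets the term $\int(\nabla f(x+y)-\nabla f(x))\nu(x,\mu,dy)$, again estimated by the first-moment bound, and the term $\int(f(x+y)-f(x))\nabla_x\nu(x,\mu,dy)$, which is exactly where the hypotheses on the gradient measure $\nabla\nu$ in \eqref{eqshortcondlevynonlin3}--\eqref{eqshortcondlevynonlin1} are used.

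It remains to supply condition~(ii), and for this I would check the hypotheses (ii')--(iv') of Theorem~\ref{thbasicnonlinearwellpose1} for each frozen generator $A[\mu]$. Boundedness $\tilde D\to D$ and $D\to B$ is the step above. The generation statement (iii'), that each $A[\mu]$ generates a strongly continuous (Feller) semigroup in $C_\infty(\R^d)$ with the invariant cores $C^1_\infty$ and $C^2_\infty$ and uniform exponential bounds $e^{Ks}$ on all three spaces, is the linear, time-homogeneous theory of processes of order at most one; here the tightness conditions \eqref{eqshortcondlevynonlin1}--\eqref{eqshortcondlevynonlin2} guarantee both the Feller property (mapping $C_\infty$ into itself) and non-explosion, and I would invoke the corresponding construction in \cite{Ko11Markbook}. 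Finally (iv'), invariance of $M=\PC(\R^d)$ under the dual semigroups, reduces to conservativity: since $A_\mu$ annihilates constants and satisfies the positive maximum principle, $e^{sA[\mu]}$ is sub-Markov, and the tightness conditions upgrade this to mass preservation, so the dual preserves probability measures.

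The routine parts are the estimates in the second paragraph, which are immediate consequences of the stated boundedness, tightness and Lipschitz hypotheses. \emph{The main obstacle is condition~(iii'):} the construction of the frozen-coefficient Feller semigroup together with the simultaneous invariance of the two cores $C^1_\infty(\R^d)$ and $C^2_\infty(\R^d)$ and the uniform $e^{Ks}$ bounds on $B$, $D$ and $\tilde D$. This is the genuinely analytic input, and it is precisely what the smoothness assumption on $\nabla\nu$ and the tightness estimates are designed to secure; once it is in place, Theorems~\ref{thbasicnonlinearwellpose1} and~\ref{thbasicnonlinearwellpose} yield the asserted well-posedness and the Lipschitz dependence \eqref{eqLipcontnonlinearsemigroup} in the norm of $(C^1_\infty(\R^d))^\star$.
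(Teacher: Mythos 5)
Your proposal is correct and follows the paper's overall strategy: the paper's proof is exactly the choice $M=\PC(\R^d)$, $D=C^1_\infty(\R^d)$, $\tilde D=C^2_\infty(\R^d)$ followed by an appeal to Theorem \ref{thbasicnonlinearwellpose}. The one genuine difference is how condition (ii) of that theorem is supplied. The paper cites the ready-made time-nonhomogeneous propagators (generated by $A[\xi_t]$ along a given curve $\xi_.$) constructed in \cite{Ko10nonlbook} (Chapter 4) and \cite{Ko11Markbook} (Chapter 5), whereas you route through Theorem \ref{thbasicnonlinearwellpose1}, so that the only external input you need is the time-homogeneous (frozen-coefficient) theory: generation of a Feller semigroup by each $A[\mu]$ with invariant cores $C^1_\infty$, $C^2_\infty$ and uniform $e^{Ks}$ bounds, the $T$-products of Theorem \ref{thbasicTproduct} then assembling these into the required propagators. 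This is a legitimate and arguably more self-contained variant -- indeed the paper itself remarks, after Theorem \ref{thbasicnonlinearwellposevar}, that condition (ii) ``can naturally be carried out via Theorem \ref{thbasicnonlinearwellpose1}, that is via $T$-products'' -- and it correctly identifies the frozen-coefficient generation statement (your (iii')) as the irreducible analytic core that must come from \cite{Ko11Markbook}. Your explicit verification of \eqref{eqAisLiponxi} from \eqref{eqshortcondlevynonlin4}--\eqref{eqshortcondlevynonlin5} via the bound $|f(x+y)-f(x)|\le 2\min(1,|y|)\|f\|_{C^1}$, and of the $\tilde D\to D$ boundedness using the hypotheses on $\nabla\nu$, fills in details the paper leaves implicit, and both are right. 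One small point common to your argument and the paper's: the theorem is stated for arbitrary $\mu\in\MC(\R^d)$ with norm preservation, while the contraction argument is run on $M=\PC(\R^d)$; to cover general finite measures one should note that the same argument applies on each sphere $\{\mu:\|\mu\|=\la\}$, $\la>0$, which is likewise bounded and closed in $B^{\star}$ and $D^{\star}$.
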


\begin{proof} Here we use
$M=\PC(\R^d)$, $D=C_{\infty}^1(\R^d)$, $\tilde D=C_{\infty}^2(\R^d)$.
The corresponding auxiliary propagators required in Theorem \ref{thdualpropag}
are constructed in \cite{Ko10nonlbook} (Chapter 4) and \cite{Ko11Markbook} (Chapter 5).
\end{proof}

In both cases above, straightforward additional smoothness assumptions on the coefficients
of the generator yield smoothness with respect to parameters and/or initial data
via Theorem \ref{thnonlsensit}.

Similarly one gets the well-posedness for
mixtures of nonlinear diffusions and stable-like processes given
by \eqref{eqgendifstablelikedegenper} with coefficients depending on distribution $\mu$.
Our theory also applies to nonlinear stable-like processes on manifolds, see \cite{Ko10nonlbook}
(Section 11.4), and to nonlinear dynamic quantum semigroups, see \cite{Ko10nonlbook}
(Section 11.3).

Let us stress again, referring to
\cite{Ko10nonlbook}, \cite{Ko08} and \cite{Ko07}, that the first and second derivatives of nonlinear Markov semigroups
with respect to initial data (for simplicity, we dealt only with the first derivative here) describe
the dynamic law of large numbers for interacting particle systems and the corresponding central limit theorem for fluctuations, respectively.

\end{document}